\newtheorem{thm}{Theorem}[section]
\newtheorem{lem}[thm]{Lemma}
\newtheorem{cor}[thm]{Corollary}
\theoremstyle{definition}
\newtheorem{defi}[thm]{Definition}
\newtheorem{prop}[thm]{Proposition}
\newtheorem{ex}[thm]{Example}
\newtheorem{question}[thm]{Open Problem}
\numberwithin{equation}{section}
\newcommand{\R}{\ensuremath{\mathbb{R}}}
\newcommand{\N}{\ensuremath{\mathbb{N}}}
\newcommand{\remark}{\medskip\noindent\textbf{Remark:\ }}
\newcommand{\U}{{\cal U}}
\newcommand{\Lo}{{\cal L}}
\newcommand{\Hu}{{\overline H}}
\newcommand{\Hl}{{\underline H}}
\newcommand{\bo}{\partial\Omega}
\newcommand{\ep}{\varepsilon}
\newcommand{\eps}{\epsilon}
\newcommand{\cal}{\mathcal}
\begin{document}

\title[An invariance of $p$-harmonic functions
under boundary perturbations]{Tug-of-war with noise and an
invariance of $p$-harmonic functions under boundary perturbations}

\author[Sungwook Kim]{Sungwook Kim}
\address{Courant Institute of Mathematical Sciences, New York University, New York,
NY,10012} \email{sungwooknyu@gmail.com}

\begin{abstract}
\label{sec:abstract} In this paper, we provide new results about an
invariance of $p$-harmonic functions under boundary perturbations by
using tug-of-war with noise; a probabilistic interpretation of
$p$-harmonic functions introduced by Peres-Sheffield in \cite{ps}.
As a main result, when $E\subset\bo $ is countable and $f\in
C(\bo)$, we provide a necessary and sufficient condition for $E$ to
guarantee that $H_g=H_f$ whenever $g=f$ on $\bo\setminus E$. Here
$H_f$ and $H_g$ denote the Perron solutions of $f$ and $g$. It turns
out that $E$ should be of $p$-harmonic measure zero with respect to
$\Omega$. As a consequence, we analyze a structure of a countable
set of $p$-harmonic measure zero. In particular, we give some
results for the subadditivity of $p$-harmonic measures and an
invariance result for $p$-harmonic measures. In addition, the
results in this paper solve the problem regarding a perturbation
point Bj\"orn \cite{Bjorn} suggested for the case of unweighted
$\R^n$.

\end{abstract}

\maketitle

\section{Introduction}
\label{sec:intro} A function $u$ on a domain $\Omega$ is called
\emph{$p$-harmonic} in $\Omega$ (for $1 < p < \infty$) if it is a
weak solution to
\[ \Delta_pu := \mathrm{div}(|Du|^{p-2}Du) = 0~~ \mathrm{ in }~ \Omega, \]
(or as viscosity solutions--see either \cite{viscosty} or Section 1
in \cite{ps}). That is, $u$ is $p$-harmonic in $\Omega$ if and only
if it belongs to the Sobolev space $W^{1,p}_{\text{loc}}(\Omega)$
(i.e., $\nabla u \in L^p_{\text{loc}}(\Omega)$) and
\[\label{euler} \int_{\Omega} |\nabla u|^{p-2} ( \nabla u, \nabla \phi ) dx =
0 \] for every $\phi \in C^{\infty}_0(\Omega)$. $\Delta_p$ is called
the {\it $p$-Laplace operator} or {\it $p$-Laplacian}.

The Dirichlet problem for the $p$-Laplace equation involves finding
a $p$-harmonic extension $u$ to $\Omega$ of a boundary function $f$
defined on $\partial \Omega$;
\begin{equation}
\label{eqn:Dirichlet} \Delta_p u=0 ~{\textrm {in} }~ \Omega~~
{\textrm {and} }~~ u=f~ \textrm{on}~ \bo.
\end{equation}
The existence and uniqueness of the solution for
(\ref{eqn:Dirichlet}) is well-known in the Sobolev sense. (See
\cite{hjk}.) However, due to non-linearity of the $p$-Laplacian,
there are many open problems. An intriguing problem is that of {\it
$p$-harmonic measure} which is the solution of (\ref{eqn:Dirichlet})
when $f=\chi_E$ and $E\subset \bo$. More precisely, the $p$-harmonic
measure of $E$ with respect to~$\Omega$ evaluated at $x\in \Omega$
is defined by
\[\omega_p(x;E,\Omega)=
\overline H_{\chi_E}(x)\] where $\Hu$ denotes the upper Perron
solution of (\ref{eqn:Dirichlet}). (See Section 2 for all the
definitions and notations.) It is well known that when $p=2$ and
$\Omega$ is regular, $\omega_p(x;\cdot,\Omega)$ defines a
probability measure on $\partial\Omega$, but when $p\neq2$,
$p$-harmonic measure is not a measure. Very little is known about
the measure theoretic properties of $p$-harmonic measure. Martio
\cite{Martio} asked whether $p$-harmonic measure defines an outer
measure on zero-level set of $p$-harmonic measure, i.e. whether
$p$-harmonic measure is subadditive on the sets whose $p$-harmonic
measure is zero. Llorente-Manfredi-Wu \cite{ljm} negatively answered
to Martio's question; when $\Omega$ is the upper half plane, there
exist sets $A, B\subset\bo$ such that
$\omega_p(A,\Omega)=\omega_p(B,\Omega)=0$, $A\cup B=\bo=\R$ and
$|\R\setminus A|=|\R\setminus B|=0$ where $|\cdot|$ stands for
Lebesgue measure on $\R$. However, as far as the author is aware,
the following problem concerning $p$-harmonic measure still remains
unsolved.

\begin{question}
\label{ques:pharmonicmeasure} When $E,F\subset\bo$ are both compact
and $\omega_p(E,\Omega)=\omega_p(F,\Omega)=0$, is it
$$\omega_p(E\cup F,\Omega)=0?$$
\end{question}
\noindent Further questions and discussions on $p$-harmonic measures
can be found in \cite{hjk}, \cite{ba} and \cite{bj1}.

Another interesting problem for \eqref{eqn:Dirichlet} is a {\it
boundary perturbation problem}; when $f,g$ are two boundary
functions on $\bo$ such that $f=g$ except $E\subset \bo$, what
condition for $E$ implies $H_f=H_g$? (Here $H_f$ and $H_g$ denotes
the Perron solutions of $f$ and $g$.) When $\Omega\subset\R^n$ is
bounded and $1<p\leq n$, an important result is obtained by
Bj\"orn-Bj\"orn-Shanmugalingam \cite{bj2}; if $f\in C(\bo)$ and
$g=f$ on $\bo$ except a set of $p$-capacity zero, then $H_f=H_g$.
Note that when $\Omega\subset\R^n$ and $p>n$, there exists no set of
$p$-capacity zero. Therefore the methods in \cite{bj2} cannot be
applied when $p>n$. There has been little work done when $p>n$. Even
when $n=2$ and $p>2$, a seemingly simple question suggested by
Baernstein \cite{ba} has not been answered until the works of
Bj\"orn \cite{Bjorn} and Kim-Sheffield \cite{ks}; if
$\Omega=B(0,1)\subset\R^2$, $E$ is a finite union of open arcs on
$\bo$, $f=\chi_E$ and $g=\chi_{\overline E}$, then $H_f=H_g$. A
first result for a boundary perturbation problem when $n\geq 2$ and
$p>n$ is given by Bj\"orn \cite{Bjorn}, where he introduced the
notion of a perturbation point which is a simple version of a
boundary perturbation problem;

\begin{defi}
\label{def:perturbation} Let $\Omega\subset\R^n$ be a bounded
domain. $x_0\in\bo$ is called {\it a perturbation point (of
$\Omega$)}; whenever $f\in C(\bo)$ and $g$ is a bounded function on
$\bo$ such that $g=f$ on $\bo\setminus\{x_0\}$, we have
$$ H_f=H_g.$$
\end{defi}
Note that not every regular boundary point is a perturbation point
as the following example shows.

\begin{ex}
\label{ex:notperturabtion} Let $n<p<\infty$ and
$\Omega=B(0,1)\setminus\{0\}\subset\R^n$. Let $f=0$ and
$g=\chi_{\{0\}}$ on $\bo$. Then we can verify that $H_f=0$ and
$H_g(x)=1-|x|^{\frac{p-n}{p-1}} $. Therefore, $H_f\neq H_g$ and $0$
is not a perturbation point.
\end{ex}

As one major result in \cite{Bjorn}, Bj\"orn showed that an
{exterior ray point} is always a perturbation point and $H_f=H_g$
whenever $f\in C(\bo)$ and $g=f$ on $\bo$ except countable exterior
ray points. Most of the results in \cite{Bjorn} can be extended by
replacing an exterior ray point with any perturbation point. By
observing that $0$ in Example \ref{ex:notperturabtion} is an
isolated boundary point, Bj\"orn proposed the following problem in
\cite{Bjorn};

\noindent{\bf Bj\"orn's problem )} Is it true that any regular point
which is not isolated among the regular boundary points is a
perturbation point?

In this paper, we give several invariance results for $p$-harmonic
functions including an affirmative answer to Bj\"orn's problem by
using {\it tug-of-war with noise}; a probabilistic interpretation of
$p$-harmonic functions introduced by Peres-Sheffield in \cite{ps}.
The main result is Theorem \ref{thm:perturbationcountablecase},
which reveals a link between $p$-harmonic measure and a boundary
perturbation problem as well as analyzes the structure of a
countable set of $p$-harmonic measure zero and gives a necessary and
sufficient condition for a boundary perturbation problem when $f\in
C(\bo)$ and $E\subset \bo$ is countable. An interesting fact is that
when $E\subset\bo$ is a countable set, a boundary perturbation
problem and $\omega_p(E,\Omega)=0$ are local properties. As other
important consequences, Theorem \ref{thm:countablesubadditive} and
Theorem \ref{thm:countablepmeasureinvari} show that $p$-harmonic
measure is subadditive on $\{E\subset\bo: E~\text{is~a
countable~set~of~$p$-harmonic~measure zero} \}$ and a countable set
of $p$-harmonic measure zero does not affect the $p$-harmonic
measure of any closed set on $\bo$. Theorem
\ref{thm:gameperturbatoin} and Theorem \ref{thm:gameneg} will play a
vital role to obtain most of results. In particular, Theorem
\ref{thm:gameneg} answers Bj\"orn's problem affirmatively and shows
the locality of a perturbation point.  All the results are new when
$p>n$.

The outline of the paper is as follows. In Section 2, we give some
preliminary results for $p$-harmonic functions and Perron solutions.
In Section 3, we give a brief explanation of tug-of-war with noise
and characterize a perturbation point in terms of tug-of-war with
noise. In Section 4, we give a necessary and sufficient condition
for a perturbation point, thereby answering Bjo\"rn's question
affirmatively. As applications, in Section 5, we provide several
results for $p$-harmonic measures as well as a boundary perturbation
problem with a countable set. Finally, in Section 6, we give some
open problems concerning a boundary perturbation problem and
$p$-harmonic measures.

\section{Definitions and preliminary results }
\label{sec:def} The main reference for the results and notation in this section
is \cite{hjk}.
\begin{defi}
A {\it domain} $\Omega\subset\R^n$ is an open connected subset. When
there exists $B(0,R)\subset\R^n$ such that $\Omega\subset B(0,R)$,
we say that $\Omega$ is a {\it bounded domain}.
\end{defi}

First, we state some properties of $p$-harmonic functions which will
be used later in this paper.
\begin{thm}{\rm(Strong maximum principle)}
A nonconstant $p$-harmonic function in a domain $\Omega$ cannot
attain its supremum or infimum.
\end{thm}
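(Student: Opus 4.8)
The plan is to reduce the statement to Harnack's inequality for nonnegative $p$-harmonic functions, which is a standard consequence of the De Giorgi--Nash--Moser theory for the $p$-Laplacian (see \cite{hjk}): if $v\geq 0$ is $p$-harmonic in a ball $B(x_0,2r)\subset\Omega$, then there is a constant $C=C(n,p)$ with
\[
\sup_{B(x_0,r)} v \leq C \inf_{B(x_0,r)} v .
\]
I would also use two elementary invariances of the $p$-Laplace equation, namely that if $u$ is $p$-harmonic then so are $-u$ and $u+c$ for every constant $c$, since $\Delta_p(-u)=-\Delta_p u$ and $\Delta_p(u+c)=\Delta_p u$; in particular these operations preserve the class of weak (equivalently viscosity) solutions, and they preserve continuity.

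Suppose, for contradiction, that a nonconstant $p$-harmonic $u$ attains its supremum $M:=\sup_\Omega u$ at some $x_0\in\Omega$. (If $\sup_\Omega u=+\infty$ it is certainly not attained, so we may assume $M$ is finite.) Set $v:=M-u$. By the invariances above $v$ is $p$-harmonic in $\Omega$, and moreover $v\geq 0$ with $v(x_0)=0$. Fix $r>0$ with $B(x_0,2r)\subset\Omega$ and apply Harnack's inequality to $v$ on $B(x_0,2r)$:
\[
0\ \leq\ \sup_{B(x_0,r)} v \ \leq\ C \inf_{B(x_0,r)} v \ \leq\ C\, v(x_0) \ =\ 0 .
\]
Hence $v\equiv 0$, i.e. $u\equiv M$, on the whole ball $B(x_0,r)$.

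A standard connectedness argument now upgrades this local statement to a global one. Let $A:=\mathrm{int}\{x\in\Omega: u(x)=M\}$. By the previous paragraph $x_0\in A$, so $A\neq\emptyset$, and $A$ is open by definition. If $x\in\overline A\cap\Omega$, then $u(x)=M$ by continuity of $u$, so $x$ is again a point at which $u$ attains its supremum; the Harnack argument then shows $u\equiv M$ on a neighborhood of $x$, so $x\in A$. Thus $A$ is relatively closed in $\Omega$, and since $\Omega$ is connected, $A=\Omega$, i.e. $u\equiv M$, contradicting the hypothesis that $u$ is nonconstant. The corresponding statement for the infimum follows by applying what we have just proved to $-u$, which is $p$-harmonic and satisfies $\sup_\Omega(-u)=-\inf_\Omega u$.

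The only substantive input is Harnack's inequality for $p$-harmonic functions; once that is granted the argument is routine. The points one has to be slightly careful about are merely that $v=M-u$ is genuinely a \emph{nonnegative} $p$-harmonic function (so that Harnack applies with the stated sign), and that $p$-harmonic functions are continuous (so that the level set $\{u=M\}$ is closed and the connectedness step goes through); both are standard facts recorded in \cite{hjk}.
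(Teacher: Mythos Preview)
Your proof is correct and is the standard Harnack-inequality argument for the strong maximum principle. Note, however, that the paper does not actually prove this theorem: it is stated in Section~2 as a preliminary result quoted from \cite{hjk}, without proof. Your argument is exactly the one given in that reference, so there is nothing to compare.
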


\begin{thm}{\rm(Harnack's convergence theorem)}
\label{thm:Harnack} Suppose that $u_i, i=1,2,\dots$, is an
increasing sequence of $p$-harmonic functions in $\Omega$. Then the
function $u=\lim_{i\rightarrow \infty}u_i$ is either $p$-harmonic in
$\Omega$ or identically $+\infty$.
\end{thm}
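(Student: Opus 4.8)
The plan is to combine Harnack's inequality for nonnegative $p$-harmonic functions with the connectedness of $\Omega$. The one subtlety is that for $p\neq 2$ a difference of two $p$-harmonic functions need not be $p$-harmonic, so one cannot normalize by subtracting $u_1$; instead I would argue locally and add constants. Fix $x_0\in\Omega$ and $r>0$ with $\overline{B(x_0,4r)}\subset\Omega$, and write $B=B(x_0,4r)$, $B'=B(x_0,r)$. Since $p$-harmonic functions are continuous, $u_1$ is bounded below on $\overline B$, say $u_1\geq -M$; then, because $u_i\geq u_1$, the functions $v_i:=u_i+M$ form an increasing sequence of \emph{nonnegative} $p$-harmonic functions on $B$ (adding a constant does not change the Euler--Lagrange equation). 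Harnack's inequality (as in \cite{hjk}) then provides a constant $C=C(n,p)$ with $\sup_{B'}v_i\leq C\inf_{B'}v_i$ for every $i$.

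From this estimate the dichotomy drops out pointwise. If $u(x_0)=+\infty$ then $v_i(x_0)\to\infty$, so for each $y\in B'$ one has $v_i(y)\geq\inf_{B'}v_i\geq C^{-1}\sup_{B'}v_i\geq C^{-1}v_i(x_0)\to\infty$; hence $u\equiv+\infty$ on $B'$, and $\{u=+\infty\}$ is open. If instead $u(x_0)<+\infty$ then $v_i(x_0)\uparrow u(x_0)+M<\infty$ is bounded, so $\sup_{B'}v_i\leq C\inf_{B'}v_i\leq Cv_i(x_0)$ stays bounded, the $u_i$ are uniformly bounded on $B'$, and in particular $\{u<+\infty\}$ is open. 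Since $x_0$ was arbitrary and $\Omega$ is connected, $\Omega$ is the disjoint union of the two open sets $\{u=+\infty\}$ and $\{u<+\infty\}$, so one of them is empty. If the latter is empty we are done, with $u\equiv+\infty$.

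In the remaining case $u<+\infty$ on all of $\Omega$, the computation above shows that $\{u_i\}$ is locally uniformly bounded on $\Omega$. Here I would invoke the interior regularity theory for $p$-harmonic functions: a family of $p$-harmonic functions sharing a common local $L^\infty$ bound is locally equicontinuous (in fact bounded in $C^{1,\alpha}_{loc}$, and in $W^{1,p}_{loc}$ via Caccioppoli's inequality). By Arzel\`a--Ascoli some subsequence converges locally uniformly, necessarily to $u$, so $u$ is continuous; and a locally uniform limit of $p$-harmonic functions is $p$-harmonic --- this can be seen either from stability of viscosity solutions under locally uniform convergence, or by passing to the limit in the weak Euler--Lagrange identity using $L^p_{loc}$-convergence of the gradients. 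Finally, since $u_i\uparrow u$ with $u$ continuous, Dini's theorem upgrades the pointwise convergence to locally uniform convergence of the whole sequence, so $u$ is $p$-harmonic in $\Omega$.

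The main obstacle is precisely this last step: with no Poisson-kernel representation available when $p\neq 2$, the passage to the limit must be justified through the nonlinear interior estimates (or viscosity-solution stability) rather than by an explicit formula. By contrast, the openness of $\{u=+\infty\}$ and of $\{u<+\infty\}$ is a routine consequence of Harnack's inequality, and the dichotomy itself is just the connectedness of $\Omega$.
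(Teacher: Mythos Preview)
The paper does not actually prove this theorem; it is stated in Section~\ref{sec:def} as a preliminary result taken from the reference \cite{hjk}, with no argument supplied. Your sketch is correct and is essentially the standard proof one finds there: Harnack's inequality applied to the nonnegative shifts $u_i+M$ on small balls yields the open--open decomposition $\{u=+\infty\}\cup\{u<+\infty\}$, connectedness gives the dichotomy, and in the finite case local $L^\infty$ bounds feed into Caccioppoli and interior H\"older estimates to produce locally uniform convergence and a $p$-harmonic limit. So there is nothing to contrast---your approach coincides with the textbook one the paper is citing.
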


\begin{defi}
A function $u:\Omega \rightarrow (-\infty,\infty]$ is called {\it
$p$-superharmonic} in $\Omega$ if (i) $u$ is lower semicontinuous in
$\Omega$, (ii) $u \not = \infty$ in $\Omega$, and (iii) for each
domain $D\subset\subset \Omega$, the following
    comparison principle holds: if $h \in
C(\overline{D})$ is $p$-harmonic in $D$ and $u \geq h$ on $\partial
D$, then $u \geq h$ in $D$. We say that $u$ is {\it $p$-subharmonic}
in $\Omega$ if $-u$ is $p$-superharmonic in $\Omega$.
\end{defi}

The following comparison principle will be used many times
throughout this paper.
\begin{thm}{\rm(Comparison Principle)} Suppose that $u$ is
 $p$-superharmonic and $v$ is $p$-subharmonic in $\Omega$. If
\[\mathrm{limsup}_{y\rightarrow x}v(y)\leq \mathrm{liminf}_{y\rightarrow x}u(y)\]
for all $x\in\partial\Omega$, and also for $x=\infty$ if $\Omega$ is
unbounded, (excluding the situation $\infty\leq\infty$ and
$-\infty\leq-\infty$), then $v\leq u$ in $\Omega$.
\end{thm}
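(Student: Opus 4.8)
The plan is to argue by contradiction: I localise the set $\{v>u\}$ to a compact subset of $\Omega$, truncate $u$ and $v$ so that they become bounded weak super/subsolutions of $\Delta_p=0$ (hence members of $W^{1,p}_{loc}$), and then contradict the strict monotonicity of the vector field $\xi\mapsto|\xi|^{p-2}\xi$. Concretely, I would set $w:=v-u$ with the convention $w:=-\infty$ on $\{u=+\infty\}\cup\{v=-\infty\}$; since both $v$ and $-u$ are upper semicontinuous, so is $w$. Fix $\varepsilon>0$ and let $G:=\{x\in\Omega:w(x)>\varepsilon\}$, which is open. The boundary hypothesis gives $\limsup_{y\to x}w(y)\le0<\varepsilon$ at every $x\in\partial\Omega$ (and at $\infty$ when $\Omega$ is unbounded), so $w<\varepsilon$ in a punctured neighbourhood of each boundary point and outside a large ball; hence $\overline G$ is a compact subset of $\Omega$ and, by upper semicontinuity, $w\le\varepsilon$ on $\partial G$. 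It therefore suffices to prove $G=\emptyset$ for every $\varepsilon>0$, for then $w\le0$, i.e.\ $v\le u$.

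So suppose $G\neq\emptyset$. Since $\overline G$ is compact in $\Omega$, $v$ is bounded above and $u$ is bounded below on some neighbourhood $N\subset\subset\Omega$ of $\overline G$, and — as $u$ is lower semicontinuous and $\overline G$ lies in the open set $\{u>\inf_{\overline G}u-1\}$ — after shrinking $N$ I may also assume $u>\inf_{\overline G}u-1$ on $N$. Then for a sufficiently large constant $M$ and a sufficiently small constant $m$ the functions $\tilde u:=\min(u,M)$ and $\tilde v:=\max(v,m)$ are still $p$-super/subharmonic, are bounded on $N$, and coincide with $u$ and $v$ on $G$; a routine case analysis also shows that $\varphi:=(\tilde v-\tilde u-\varepsilon)^+$ vanishes on $N\setminus G$, so $\varphi$ is a bounded nonnegative function in $W^{1,p}_0(N)$ with $\{\varphi>0\}=G$. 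Since bounded $p$-super/subharmonic functions are weak super/subsolutions (see \cite{hjk}), I obtain $\int_N|D\tilde u|^{p-2}(D\tilde u,D\varphi)\,dx\ge0$ and $\int_N|D\tilde v|^{p-2}(D\tilde v,D\varphi)\,dx\le0$; subtracting, and using $D\varphi=D(v-u)$ a.e.\ on $G$ while $D\varphi=0$ a.e.\ off $G$, I get
\[ \int_{G}\big(|Dv|^{p-2}Dv-|Du|^{p-2}Du,\ Dv-Du\big)\,dx\le0. \]
By the strict monotonicity of $\xi\mapsto|\xi|^{p-2}\xi$ the integrand is nonnegative and vanishes only where $Du=Dv$, so $Du=Dv$ a.e.\ on $G$; hence $w=v-u$ equals some constant $c_D$ almost everywhere on each connected component $D$ of $G$, and $c_D>\varepsilon$ since $w>\varepsilon$ throughout $D$. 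Now $D$ is a proper nonempty open subset of the connected set $\Omega$ with $\overline D\subset\subset\Omega$, so (using that $D$ is relatively closed in $G$) $\partial D$ contains a point $x\notin G$; then $x\in\Omega$, so $w(x)\le\varepsilon$, whereas choosing $x_j\in D$ with $w(x_j)=c_D$ and $x_j\to x$ and applying upper semicontinuity gives $w(x)\ge\limsup_j w(x_j)=c_D>\varepsilon$ — a contradiction. Hence $G=\emptyset$, and letting $\varepsilon\downarrow0$ finishes the proof.

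I expect the genuine difficulty to be the passage between the two equivalent notions of $p$-superharmonicity: as defined, $u$ and $v$ are only comparable with $p$-harmonic functions on subdomains, are in general neither continuous nor in $W^{1,p}_{loc}$, and cannot be tested against one another directly. Confining the bad set $G$ to a compact piece of $\Omega$ — the only place the hypothesis at $\infty$ is used — and then truncating $u$ and $v$ to a bounded range, so that the regularity theory makes them weak super/subsolutions, is what legitimises the energy estimate; the fiddly point is to choose the truncation constants and the neighbourhood $N$ so that the truncation changes neither the set $G$ nor the boundary inequality. An alternative that bypasses the Sobolev characterisation is to apply Poisson modifications of $u$ and $v$ in small balls inside $G$, reducing matters to the comparison of two $p$-harmonic functions (for which the integration by parts is immediate); the localisation and truncation issues, however, remain the same.
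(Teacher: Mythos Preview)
The paper does not prove this theorem; it is quoted without proof in Section~2 as a preliminary result from \cite{hjk} (Heinonen--Kilpel\"ainen--Martio), so there is no in-paper argument to compare against. Your proof is essentially the classical one found in that reference: localise $\{v-u>\varepsilon\}$ to a compact subset of $\Omega$, truncate so that $u,v$ become bounded weak super/subsolutions in $W^{1,p}_{loc}$, test with $(\tilde v-\tilde u-\varepsilon)^+$, and use strict monotonicity of $\xi\mapsto|\xi|^{p-2}\xi$ to force $D(v-u)=0$ a.e.\ on the bad set, contradicting the boundary behaviour. The argument is correct; the only step you label ``routine case analysis'' that is worth writing out explicitly is the choice of truncation levels so that $\tilde v-\tilde u\le\varepsilon$ persists on $N\setminus G$ --- one needs roughly $M\ge\sup_N v-\varepsilon$ and $m\le\inf_N u+\varepsilon$ in addition to $M>\sup_G u$, $m<\inf_G v$ --- but this is indeed just bookkeeping.
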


\begin{defi}
Let $f:\bo\rightarrow [-\infty, \infty]$. The {\it upper class}
${\U}_{f}$ consists of all the functions $u$ such that (i) $u$ is
$p$-superharmonic in $\Omega$, (ii) $u$ is bounded below, and (iii)
$\liminf_{x\rightarrow y}u(x)
    \geq f(x)$ for all $y\in\partial\Omega$.
The {\it lower class} $\Lo_f$ is defined as $v\in\Lo_f$ if and only
if $-v\in\U_{-f}$.
\end{defi}

\begin{defi}
The {\it upper Perron solution}, $\Hu_f$ and {\it lower Perron
solution}, $\Hl_f$ are defined by
$$\Hu_f(x)=\inf\{u(x): u\in\U \}~\text{and}~\Hl_f(x)=\sup
\{v(x): v\in\Lo \}.$$
\end{defi}

Note that the comparison principle shows that $\Hl_f\leq\Hu_f$. We
list some basic properties of the Perron solutions.
\begin{prop}\hfill
\label{prop:semidecreasing}
\begin{itemize}
\item[i)] $\Hl_f$ and $\Hu_f$ are
$p$-harmonic in $\Omega$ unless they are not identically $\pm
\infty$.
\item[ii)] Let $f_j:\bo\rightarrow [-\infty,\infty)$ be a decreasing
sequence of upper semicontinuous functions and $f=\lim f_j$. Then
$\Hu_f=\lim_{j\rightarrow \infty}\Hu_{f_j}.$
\end{itemize}
\end{prop}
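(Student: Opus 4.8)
The plan is to treat the two parts separately, as they rely on different tools from Section~\ref{sec:def}. For part~i), I would first recall that $\Hu_f \ge \Hl_f$ by the Comparison Principle, so it suffices to argue that each of $\Hu_f$ and $\Hl_f$ is either $p$-harmonic or identically $\pm\infty$; by the duality $\Hl_f = -\Hu_{-f}$ it is enough to handle $\Hu_f$. The key observation is that $\Hu_f$ is the infimum of the family $\U_f$ of $p$-superharmonic functions, and that this family is a \emph{Perron family}: it is closed under taking the minimum of two members, and it is closed under replacing a member $u$ by its ``$p$-harmonic modification'' (the Poisson-type solution) on any ball $B \subset\subset \Omega$, which is still $p$-superharmonic and no larger. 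Fixing a ball $B \subset\subset \Omega$ and a point $x_0 \in B$, I would choose a sequence $u_j \in \U_f$ with $u_j(x_0) \to \Hu_f(x_0)$, replace $u_j$ by $v_j = \min(u_1,\dots,u_j)$ to make the sequence decreasing, and then replace $v_j$ by its $p$-harmonic modification $w_j$ on $B$. Harnack's convergence theorem (Theorem~\ref{thm:Harnack}), applied to the monotone limit of the $w_j$ (after passing to the decreasing case, or to an increasing sequence chosen analogously from the lower class), shows the limit is $p$-harmonic in $B$ or identically $\pm\infty$; a standard connectedness/diagonal argument over a countable exhaustion of $\Omega$ by balls then upgrades this to all of $\Omega$, and one checks the limit agrees with $\Hu_f$ on $B$ by the minimality in the definition of the Perron solution.

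For part~ii), with $f_j \downarrow f$ a decreasing sequence of upper semicontinuous functions bounded above, monotonicity of the upper Perron solution in the boundary data (immediate from the definition of $\U_f$, since $f_j \ge f_{j+1} \ge f$ forces $\U_{f_j} \subset \U_{f_{j+1}} \subset \cdots$, hence $\Hu_{f_j} \ge \Hu_{f_{j+1}} \ge \Hu_f$) gives that $h := \lim_j \Hu_{f_j}$ exists and satisfies $h \ge \Hu_f$. The substance is the reverse inequality $h \le \Hu_f$. Here I would fix $\varepsilon > 0$ and, using part~i) together with Harnack's theorem, note that $h$ is $p$-harmonic (or $\pm\infty$, a case dealt with trivially). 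For each $j$ pick $u_j \in \U_{f_j}$ with $u_j \le \Hu_{f_j} + \varepsilon 2^{-j}$ on a fixed ball, and build a single competitor for $\U_f$ by taking an appropriate infimum or limit of the $u_j$; upper semicontinuity of the $f_j$ is exactly what makes the boundary inequality $\liminf_{x\to y} (\text{competitor})(x) \ge f(y)$ survive the limit, because near $\bo$ one can localize so that $f_j$ is close to $f$ from above on a neighborhood of $y$. This produces a member of $\U_f$ dominating $h - \varepsilon$ at the chosen point, and letting $\varepsilon \to 0$ yields $h \le \Hu_f$.

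The main obstacle I anticipate is the boundary-behavior bookkeeping in part~ii): one must assemble the functions $u_j \in \U_{f_j}$ into a single $p$-superharmonic competitor whose lower boundary limits dominate $f$, and this is where the upper semicontinuity hypothesis and the decreasing monotonicity are both essential. The cleanest route is probably to avoid constructing one global competitor directly and instead argue pointwise: fix $x_0 \in \Omega$, show $\Hu_f(x_0) \ge h(x_0)$ by extracting from the $u_j$ a $p$-superharmonic limit on each ball (via the min-stabilized, $p$-harmonically modified sequence as in part~i)) and verifying its admissibility for $\U_f$ using that, by upper semicontinuity, $\{f < f(y) + \delta\}$ is relatively open in $\bo$. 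I would also flag that the degenerate cases (where some $\Hu_{f_j} \equiv +\infty$, or $\U_f = \emptyset$) must be checked, but these reduce immediately to the conventions in the statement. By contrast, part~i) is essentially a packaging of the Perron-family machinery plus Theorem~\ref{thm:Harnack}, and I expect it to be routine.
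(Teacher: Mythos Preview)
The paper does not supply its own proof of this proposition: it is listed in Section~\ref{sec:def} as a preliminary fact taken from the reference \cite{hjk} (see the sentence opening that section), so there is no in-paper argument to compare your proposal against. Your outline is essentially the standard Perron-method proof one finds in \cite{hjk}, so in that sense it matches the intended source.

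Two small points are worth tightening. First, in part~i) you build a \emph{decreasing} sequence of $p$-harmonic modifications $w_j$ on a ball and then invoke Theorem~\ref{thm:Harnack}, but that theorem as stated in the paper is for \emph{increasing} sequences; you should either cite the companion result for decreasing, locally bounded-below sequences of $p$-harmonic functions, or run the argument symmetrically through $\Hl_f$ and the lower class so that the monotone sequence genuinely goes upward. Second, in part~ii) your ``build a single competitor in $\U_f$ from the $u_j$'' step is the only place with real content, and as written it is still a sketch: the clean route in \cite{hjk} is to use that for upper semicontinuous $g$ one has $\Hu_g = \inf\{\Hu_\varphi : \varphi \in C(\bo),\ \varphi \ge g\}$, and then observe via a Dini-type compactness argument that any continuous $\varphi$ with $\varphi > f$ on $\bo$ eventually dominates $f_j$, so the two infima over continuous majorants coincide. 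That replaces the somewhat vague ``appropriate infimum or limit of the $u_j$'' with a direct identification of the competitor classes.
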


For the boundary continuity of the Perron solutions, we introduce
the notion of regularity.
\begin{defi} $x_0\in\bo$ is called a
{\it regular} point of $\Omega$, if
$$ \lim_{x\rightarrow x_0}\Hu_{f}(x)=f(x_0)$$
for each continuous function $f:\bo\rightarrow \R$. A point is {\it
irregular} if it is not regular. If all boundary points of $\Omega$
are regular, then $\Omega$ is called {\it regular}.
\end{defi}
A necessary and sufficient condition for regularity is
well-known.(See Chapter 6 in \cite{hjk}.) In particular, any
Lipschitz domain is regular and when $p>n$, any domain is regular.

It is natural to ask which one of the two Perron solutions $\Hu_f$
and $\Hl_f$ is the ``correct" solution to the Dirichlet problem. We
introduce the notion of resolutivity.
\begin{defi}
We say that $f$ is {\it resolutive} if $\underline H_f$ and
$\overline H_f$ agree. When $f$ is resolutive, we denote the Perron
solution by $H_f:=\underline H_f = \overline H_f$ and call it {\it
the $p$-harmonic extension of $f$} to $\Omega$.
\end{defi}
When $p=2$, it is known that all measurable functions are
resolutive. It is an open question whether all measurable functions
are resolutive for general $p$. However, the following result is
known for resolutivity. For more details see Chapter 9 in \cite{hjk}

\begin{thm}
\label{thm:semicontinuous-resolutive on the regular} Let $\Omega$ be
regular. If $f$ is bounded and lower(or upper) semicontinuous on
$\bo$, then $f$ is resolutive in $\Omega$.
\end{thm}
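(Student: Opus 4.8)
The plan is to reduce to continuous boundary data and then pass to a monotone limit, with the Comparison Principle doing the work. Since $\Hl_f=-\Hu_{-f}$ and $\Hu_f=-\Hl_{-f}$, the function $f$ is resolutive if and only if $-f$ is, so it suffices to treat the case $f$ lower semicontinuous. First I would record that every $\varphi\in C(\bo)$ is resolutive when $\Omega$ is regular: both $\Hu_\varphi$ and $\Hl_\varphi$ lie between $\inf\varphi$ and $\sup\varphi$, hence are finite, hence $p$-harmonic by Proposition~\ref{prop:semidecreasing}(i), and therefore both $p$-subharmonic and $p$-superharmonic. Regularity gives $\lim_{x\to y}\Hu_\varphi(x)=\varphi(y)$ for every $y\in\bo$, and, applied to $-\varphi$ via $\Hl_\varphi=-\Hu_{-\varphi}$, also $\lim_{x\to y}\Hl_\varphi(x)=\varphi(y)$. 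Feeding the $p$-subharmonic function $\Hu_\varphi$ and the $p$-superharmonic function $\Hl_\varphi$ into the Comparison Principle (their boundary $\limsup$ and $\liminf$ coincide at each $y\in\bo$) yields $\Hu_\varphi\le\Hl_\varphi$; together with the standing inequality $\Hl_\varphi\le\Hu_\varphi$ this gives $\Hu_\varphi=\Hl_\varphi=:H_\varphi$.

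Next, with $f$ lower semicontinuous and bounded, I would approximate it from below by the inf-convolutions
\[ f_j(x)=\inf_{y\in\bo}\bigl(f(y)+j\,|x-y|\bigr), \qquad j\ge 1, \]
which are $j$-Lipschitz and satisfy $\inf_{\bo}f\le f_j\le f_{j+1}\le f$ with $f_j(x)\uparrow f(x)$ for every $x\in\bo$; here lower semicontinuity of $f$ is precisely what forces the pointwise limit to be $f$. By the first paragraph each $f_j$ is resolutive, so put $h:=\sup_j H_{f_j}=\lim_j H_{f_j}$. The $H_{f_j}$ form an increasing sequence of $p$-harmonic functions bounded above by $\sup_{\bo}f<\infty$ (by comparison with the constant $\sup_{\bo}f$), so Harnack's convergence theorem (Theorem~\ref{thm:Harnack}) shows $h$ is $p$-harmonic in $\Omega$; moreover $h\ge\inf_{\bo}f$.

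To finish I would identify $h$ with both Perron solutions of $f$. Since $f_j\le f$ we have $\Hl_{f_j}\le\Hl_f$, and $\Hl_{f_j}=H_{f_j}$, so $h\le\Hl_f$. Conversely I claim $h\in\U_f$: being $p$-harmonic it is $p$-superharmonic and bounded below, and for each $y\in\bo$ and each $j$, regularity together with resolutivity of $f_j$ gives $\lim_{x\to y}H_{f_j}(x)=f_j(y)$, whence $\liminf_{x\to y}h(x)\ge\liminf_{x\to y}H_{f_j}(x)=f_j(y)$; letting $j\to\infty$ gives $\liminf_{x\to y}h(x)\ge f(y)$. Hence $h\in\U_f$ and $\Hu_f\le h$. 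Combining, $h\le\Hl_f\le\Hu_f\le h$, so $f$ is resolutive with $H_f=h$.

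The one step that genuinely uses the hypotheses, and where care is needed, is the verification $h\in\U_f$: it rests on lower semicontinuity of $f$ (through $\sup_j f_j(y)=f(y)$) and on regularity of $\Omega$ (through the clean boundary limits $\lim_{x\to y}H_{f_j}(x)=f_j(y)$) to make the monotone limit $h$ meet the boundary inequality defining the upper class; dropping either assumption breaks the argument. The continuous-data paragraph is routine manipulation of the Comparison Principle, the $f_j$ are the standard Moreau--Yosida approximations, and one may take $\Omega$ bounded (or adjoin the point at infinity) so that the Comparison Principle applies without its extra hypothesis at $\infty$.
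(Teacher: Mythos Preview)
Your argument is correct. Note, however, that the paper does not itself prove this theorem: it is listed among the preliminary results in Section~2 with a pointer to Chapter~9 of \cite{hjk}, so there is no ``paper's own proof'' to compare against. That said, your line of attack---resolutivity of continuous data on a regular domain via the Comparison Principle, Moreau--Yosida approximation $f_j\uparrow f$ of a bounded lower semicontinuous $f$ by Lipschitz functions, Harnack's convergence theorem to make $h=\lim_j H_{f_j}$ $p$-harmonic, and then the sandwich $h\le\Hl_f\le\Hu_f\le h$ obtained by checking $h\in\U_f$---is exactly the classical proof one finds in \cite{hjk}, so in spirit you have reproduced the cited argument.

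Two small remarks. First, your last parenthetical about taking $\Omega$ bounded is appropriate: the paper works throughout with bounded domains, and the Comparison Principle as stated there needs the hypothesis at $\infty$ otherwise; it would be cleaner to simply say you are assuming $\Omega$ bounded, consistent with the rest of the paper. Second, the identity $\Hl_\varphi=-\Hu_{-\varphi}$ you invoke to transfer regularity to the lower solution is immediate from the definitions of the upper and lower classes, but since the paper does not state it explicitly, a one-line justification would not hurt.
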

\noindent{\bf Remark:} Theorem \ref{thm:semicontinuous-resolutive on
the regular} and Theorem \ref{thm:bjornpcapcityzero} shows that any
bounded function which is continuous except a single point is
resolutive. Therefore, $H_g$ is well-defined in Definition
\ref{def:perturbation}.

Now let us define $p$-harmonic measure by the upper Perron solution.

\begin{defi}
The function $\omega_p(x,E,\Omega)=\Hu_{\chi_E}(x)=\inf \U_E$ is
called the $p$-harmonic measure of $E\subset\bo$ at $x\in\Omega$
with respect to $\Omega$. If $\omega_p(E,\Omega)=0$, we say that $E$
is {\it of $p$-harmonic measure zero}.
\end{defi}

\begin{prop}\hfill
\label{prop:phamrnoicmeasure}
\begin{itemize}
\item [i)] $0\leq\omega_p(x,E,\Omega)
\leq 1$. Furthermore, if $\omega_p(x,E,\Omega)=0$ at some
$x\in\Omega$, then $\omega_p(x,E,\Omega)\equiv 0$ in $\Omega$.

    \item [ii)] If $E_1\subset E_2\subset\bo$, then
     $\omega_p(x,E_1,\Omega)\leq \omega_p(x,E_2,\Omega)$.
    \item [iii)] If $E\subset\bo_1\cap\bo_2$
    and if $\Omega_1\subset\Omega_2$, then
$\omega_p(x,E,\Omega_1)\leq \omega_p(x,E,\Omega_2)$ in $\Omega_1$.

\end{itemize}
\end{prop}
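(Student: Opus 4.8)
The plan is to read off parts (i) and (ii) directly from the definitions of the upper and lower Perron classes, and then to reduce (iii) to the monotonicity contained in (ii) by comparing the upper class on $\Omega_2$ against a suitable auxiliary boundary function on $\bo_1$. For (i), I would use the constant functions $0$ and $1$ as the two obvious competitors. The constant $1$ is $p$-harmonic, bounded below, and satisfies $\liminf_{x\to y}1=1\ge\chi_E(y)$ for every $y\in\bo$, so $1\in\U_{\chi_E}$ and hence $\omega_p(x,E,\Omega)=\Hu_{\chi_E}(x)\le 1$; the constant $0$ lies in $\Lo_{\chi_E}$ (equivalently $0\in\U_{-\chi_E}$, since $-\chi_E\le 0$ on $\bo$), so $\omega_p(x,E,\Omega)\ge\Hl_{\chi_E}(x)\ge 0$ using the comparison principle $\Hl_{\chi_E}\le\Hu_{\chi_E}$. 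For the ``furthermore'' part: since $0\le\omega_p(\cdot,E,\Omega)\le 1$ it is not identically $\pm\infty$, so by Proposition~\ref{prop:semidecreasing}(i) it is $p$-harmonic in the domain $\Omega$; if it equals $0$ at some interior point it attains its infimum in $\Omega$, and the strong maximum principle forces it to be identically $0$.

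Part (ii) is the statement that the upper Perron solution is monotone in the boundary data: $E_1\subset E_2$ gives $\chi_{E_1}\le\chi_{E_2}$ on $\bo$, and any $u\in\U_{\chi_{E_2}}$ satisfies $\liminf_{x\to y}u(x)\ge\chi_{E_2}(y)\ge\chi_{E_1}(y)$, so $\U_{\chi_{E_2}}\subset\U_{\chi_{E_1}}$ and therefore $\omega_p(x,E_1,\Omega)=\inf\U_{\chi_{E_1}}\le\inf\U_{\chi_{E_2}}=\omega_p(x,E_2,\Omega)$. I would record this monotonicity of $\Hu$ for reuse in (iii).

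For (iii), since $E\subset\bo_1\cap\bo_2$ I would introduce the boundary function $\tilde f$ on $\bo_1$ defined by $\tilde f=\chi_E$ on $\bo_1\cap\bo_2$ and $\tilde f=\omega_p(\cdot,E,\Omega_2)$ on $\bo_1\cap\Omega_2$ (note $\partial\Omega_1\subset\Omega_2\cup\bo_2$ because $\Omega_1\subset\Omega_2$). By (i), $\tilde f\ge 0$ and $\tilde f\ge\chi_E$ on $\bo_1$, so the monotonicity from (ii) gives $\omega_p(\cdot,E,\Omega_1)\le\Hu_{\tilde f}^{\Omega_1}$ in $\Omega_1$. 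The remaining and main step is $\Hu_{\tilde f}^{\Omega_1}\le\omega_p(\cdot,E,\Omega_2)$ in $\Omega_1$: for any $u\in\U_{\chi_E}(\Omega_2)$, the restriction $u|_{\Omega_1}$ is $p$-superharmonic and bounded below on the open set $\Omega_1$; at $z\in\bo_1\cap\bo_2$ one has $\liminf_{\Omega_1\ni y\to z}u(y)\ge\liminf_{\Omega_2\ni y\to z}u(y)\ge\chi_E(z)=\tilde f(z)$, while at $z\in\bo_1\cap\Omega_2$ the function $u$ is lower semicontinuous near $z$ and satisfies $u\ge\inf\U_{\chi_E}(\Omega_2)=\omega_p(\cdot,E,\Omega_2)$ on $\Omega_2$, so $\liminf_{y\to z}u(y)\ge u(z)\ge\omega_p(z,E,\Omega_2)=\tilde f(z)$. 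Hence $u|_{\Omega_1}\in\U_{\tilde f}(\Omega_1)$, and taking the infimum over $u\in\U_{\chi_E}(\Omega_2)$ yields the desired inequality.

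I expect the only genuinely delicate point to be the treatment of the part $\bo_1\cap\Omega_2$ in (iii): a boundary point of the smaller domain may lie in the interior of the larger one, where the data $\chi_E$ carries no information, so one cannot simply restrict $u$ against $\chi_E$ on $\bo_1$; building in the value $\omega_p(\cdot,E,\Omega_2)$ there through the auxiliary function $\tilde f$ is exactly what repairs the argument.
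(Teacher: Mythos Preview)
The paper does not give its own proof of this proposition; it is stated among the preliminaries with the blanket reference to \cite{hjk}, so there is nothing to compare your argument against line by line. That said, your proof is correct. Parts (i) and (ii) are exactly the standard arguments from the definitions, and your use of Proposition~\ref{prop:semidecreasing}(i) together with the strong maximum principle for the ``furthermore'' clause is the right mechanism.

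Your handling of (iii) via the auxiliary boundary datum $\tilde f$ is not merely convenient but actually necessary, and you correctly identified why in your closing remark. The naive shortcut ``restrict any $u\in\U_{\chi_E}(\Omega_2)$ to $\Omega_1$ and check it lies in $\U_{\chi_E}(\Omega_1)$'' breaks precisely at points $z\in\bo_1\cap\Omega_2$: there $\chi_E(z)=0$ (since $E\subset\bo_2$), but members of $\U_{\chi_E}(\Omega_2)$ are only required to be bounded below, not nonnegative, so $\liminf_{y\to z}u(y)\ge 0$ can fail. One also cannot simply pass to $\max(u,0)$, because the pointwise maximum of a $p$-superharmonic function with a constant is not $p$-superharmonic in general. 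Your introduction of $\tilde f$, with the value $\omega_p(\cdot,E,\Omega_2)$ on $\bo_1\cap\Omega_2$, sidesteps this by matching the boundary requirement to something every $u\in\U_{\chi_E}(\Omega_2)$ automatically dominates there. One minor point worth stating explicitly: the restriction $u|_{\Omega_1}$ is not identically $+\infty$ on the domain $\Omega_1$ (so condition (ii) in the definition of $p$-superharmonic is met), because a $p$-superharmonic function on $\Omega_2$ is finite on a dense subset.
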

To Open problem \ref{ques:pharmonicmeasure}, there is a partial
answer. (See Theorem 11.17 in \cite{hjk}.)
 \begin{thm}
\label{compact sets} Let $1<p<\infty$ and let $\Omega$ be regular.
If $E,F\subset\bo$ are closed sets of $p$-harmonic measure zero and
$E\cap F=\phi$, then $\omega_p(E\cup F,\Omega)=0$.
\end{thm}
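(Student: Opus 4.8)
The plan is to show that $\omega_p(x_0,E\cup F,\Omega)<\ep$ for an arbitrarily fixed $x_0\in\Omega$ and every $\ep>0$; by Proposition~\ref{prop:phamrnoicmeasure}(i) this forces $\omega_p(\cdot,E\cup F,\Omega)\equiv0$. We may take $\Omega$ bounded, so that $E$ and $F$ are compact; then $\dist(E,F)>0$, and we fix bounded open sets $V_1\supset E$, $V_2\supset F$ in $\R^n$ with smooth boundaries, $\closure{V_1}\cap\closure{V_2}=\emptyset$, and $x_0\notin\closure{V_1}\cup\closure{V_2}$. Put $D_i=\Omega\cap V_i$, $I_i=\Omega\cap\partial V_i$ and $\Omega_0=\Omega\setminus(\closure{V_1}\cup\closure{V_2})\ni x_0$; these are regular domains, $E\subset\partial D_1\cap\bo$, $F\subset\partial D_2\cap\bo$, and $E,F$ are disjoint from $\closure{\Omega_0}$.

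The first step is to \emph{localize the hypothesis}. By Proposition~\ref{prop:phamrnoicmeasure}(iii), $\omega_p(\cdot,E,D_1)\le\omega_p(\cdot,E,\Omega)=0$ on $D_1$, so $\Hu^{D_1}_{\chi_E}=\omega_p(\cdot,E,D_1)\equiv0$, and likewise $\Hu^{D_2}_{\chi_F}\equiv0$. Choose continuous $\phi_k\downarrow\chi_E$ on $\partial D_1$ with $0\le\phi_k\le1$. Since $D_1$ is regular, each $\phi_k$ is resolutive there (Theorem~\ref{thm:semicontinuous-resolutive on the regular}), $H^{D_1}_{\phi_k}$ is $p$-harmonic and continuous up to $\closure{D_1}$, and $H^{D_1}_{\phi_k}\downarrow\Hu^{D_1}_{\chi_E}=0$ by Proposition~\ref{prop:semidecreasing}(ii). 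By Dini's theorem the convergence is uniform on compact subsets of $\closure{D_1}$ disjoint from $E$; combined with the boundary regularity of $D_1$ along $\partial V_1\cap\bo\setminus E$, this lets us pick $k$ so that $s:=H^{D_1}_{\phi_k}$ satisfies $0\le s\le1$, $\liminf_{x\to q}s(x)\ge\chi_E(q)$ for all $q\in\partial D_1$, and $s\le\ep$ in a neighborhood of $I_1$ inside $D_1$. Symmetrically we obtain $t$ on $D_2$. Finally let $m$ be the $p$-harmonic function in $\Omega_0$ with boundary data $0$ on $\bo\cap\partial\Omega_0$, $s|_{I_1}$ on $I_1$ and $t|_{I_2}$ on $I_2$; as these data are $\le\ep$, the comparison principle gives $0\le m\le\ep$, in particular $m(x_0)\le\ep$.

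Now define $w$ on $\Omega$ by $w=s$ on $D_1$, $w=t$ on $D_2$, $w=m$ on $\Omega_0$ (these agree on $I_1,I_2$). Then $w$ is bounded, $w\ge0$, and $\liminf_{x\to q}w(x)\ge\chi_{E\cup F}(q)$ for every $q\in\bo$: near $E$ this is $\liminf s\ge\chi_E=1$, near $F$ it is $\liminf t\ge1$, and elsewhere on $\bo$ the value is $\ge0$. If $w$ is $p$-superharmonic in $\Omega$, then $w\in\U_{\chi_{E\cup F}}$ (indeed $\U_{\chi_{E\cup F}}=\U_{\chi_E}\cap\U_{\chi_F}$ since $\chi_{E\cup F}=\max(\chi_E,\chi_F)$), so $\omega_p(x_0,E\cup F,\Omega)=\Hu_{\chi_{E\cup F}}(x_0)\le w(x_0)=m(x_0)\le\ep$, and letting $\ep\to0$ finishes the proof.

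The crux, and the step I expect to be the main obstacle, is the $p$-superharmonicity of $w$ across the interfaces $I_1,I_2$. When $p=2$ one avoids the entire construction, since $\Hu_{\chi_{E\cup F}}\le\Hu_{\chi_E}+\Hu_{\chi_F}=0$; but for $p\neq2$ neither sums nor maxima of $p$-superharmonic functions are $p$-superharmonic, and a $\min$-pasting of $s,t$ against any single globally defined supersolution fails because $\min$ inherits the smallness of whichever solution does not control the set at hand. The reason for forcing $s,t\le\ep$ along $I_1,I_2$ is precisely to arrange that $w$ meets $m$ across each smooth interface with no ``upward'' corner, so that the pasting lemma (the jump condition on $|Dw|^{p-2}\partial_\nu w$) gives $p$-superharmonicity; making this matching precise — in particular controlling the normal derivatives of $s,t,m$ at the interfaces — is where the real work lies. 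An alternative route that isolates the same difficulty is to reduce first, via $\chi_{E\cup F}\le\chi_E+\phi$ for a $\phi\in C(\bo)$ vanishing near $E$ and equal to $1$ near $F$, to the statement that adding a closed set of $p$-harmonic measure zero to the boundary data of a continuous function does not change the Perron solution, and then prove that statement by the localized construction above.
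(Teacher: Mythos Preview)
The paper does not prove this statement; it is quoted as Theorem~11.17 of \cite{hjk} and used as a black box, so there is no ``paper's own proof'' to compare against. What matters, then, is whether your argument stands on its own.

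It does not, and you already put your finger on why. The glued function $w$ is built from three $p$-harmonic pieces $s,t,m$ that merely agree continuously along the interfaces $I_1,I_2$. For $p\neq 2$, continuity across an interface is \emph{not} enough to make the patched function $p$-superharmonic: one needs the conormal derivative to jump with the right sign, and nothing in your construction controls $|\nabla s|^{p-2}\partial_\nu s$ versus $|\nabla m|^{p-2}\partial_\nu m$ on $I_1$. Forcing $s\le\ep$ along $I_1$ constrains the \emph{values}, not the normal derivatives, so the ``no upward corner'' heuristic is unjustified. The alternative route you sketch at the end reduces to the assertion that a closed set of $p$-harmonic measure zero is a perturbation set for continuous data, which is essentially the same difficulty relocated; you have reformulated the problem, not solved it.

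The standard argument avoids building a competitor in $\mathcal U_{\chi_{E\cup F}}$ altogether. One sets $w=\overline H_{\chi_{E\cup F}}$ and shows $w\equiv 0$ by a maximum-principle argument. Regularity of $\Omega$ gives $\lim_{x\to y}w(x)=0$ for every $y\in\partial\Omega\setminus(E\cup F)$. Taking $G\supset F$ open with $\overline G\cap E=\emptyset$ and $D=\Omega\cap G$, one has $\omega_p(F,D)=0$ by monotonicity; then on $\partial D$ the function $(w-c)/(1-c)$, with $c=\sup_{\partial G\cap\Omega}w$, is $p$-harmonic with $\limsup\le\chi_F$, hence $\le\omega_p(\cdot,F,D)=0$ on $D$, so $\sup_D w\le c$. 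The symmetric estimate near $E$, together with the strong maximum principle on $\Omega$, forces $\sup_\Omega w=0$. This route uses only affine rescalings of a single $p$-harmonic function and comparison in subdomains, so the nonlinearity never obstructs it.
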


Next we introduce a notion of $p$-capacity.
\begin{defi}
The {\it $p$-capacity} of $E$ is defined by
\[C_p(E):=\mathrm{inf} \int_{{\ R^n}}(|u|^p+|\nabla u|^p) \]
where the infimum is taken over all $u\in W^{1,p}({\R^n})$ such that
$u=1$ in a neighborhood of $E$. If $C_p(E)=0$, we say that $E$ is a
set {\it of $p$-capacity zero}.
\end{defi}

Here are some basic properties of $p$-capacity and see Chapter 2 in
\cite{hjk} for more properties.
\begin{prop}\hfill
\begin{itemize}
\item[i)]
\label{prop:onepointcapacity}A point of $\R^n$ is of $p$-capacity
zero if and only if $1<p\leq n$. In particular, when $p>n$, there
exists no nonempty set of $p$-capacity zero.
\item[ii)] $C_p(\sum_i E_i)\leq \sum_i C_p(E_i)$. In particular, when $1<p\leq n$,
 every countable set is of $p$-capacity zero.
\end{itemize}
\end{prop}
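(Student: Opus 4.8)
The plan is to prove the two items separately, using only classical facts about the Sobolev space $W^{1,p}(\R^n)$.

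For i), I first treat $p>n$ and claim that then no nonempty set has $p$-capacity zero. The tool is Morrey's inequality: when $p>n$, every $u\in W^{1,p}(\R^n)$ has a continuous representative with $\sup_{\R^n}|u|\leq c_{n,p}\big(\int_{\R^n}(|u|^p+|\nabla u|^p)\big)^{1/p}$. If $u$ is admissible for a nonempty set $E$, i.e. $u\equiv 1$ on an open set $U\supseteq E$, then for any $x_0\in E$ the continuous representative of $u$ equals $1$ at $x_0$, whence $\int_{\R^n}(|u|^p+|\nabla u|^p)\geq c_{n,p}^{-p}$; taking the infimum over admissible $u$ gives $C_p(E)\geq c_{n,p}^{-p}>0$. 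For $1<p\leq n$ I would instead exhibit an explicit minimizing sequence of cutoffs around a point $x_0$. When $1<p<n$, let $v_j$ equal $1$ on $B(x_0,1/j)$, equal $2-j|x-x_0|$ on $B(x_0,2/j)\setminus B(x_0,1/j)$, and $0$ outside $B(x_0,2/j)$; then $|\nabla v_j|=j$ on the annulus and $0$ elsewhere, so $\int_{\R^n}(|v_j|^p+|\nabla v_j|^p)\leq C_n(j^{-n}+j^{\,p-n})\to 0$. When $p=n$ (so $n\geq 2$) these linear cutoffs no longer decay, and I would use logarithmic ones instead: let $w_j$ equal $1$ on $B(x_0,e^{-j})$, equal $\tfrac1j\log\tfrac1{|x-x_0|}$ on $B(x_0,1)\setminus B(x_0,e^{-j})$, and $0$ outside $B(x_0,1)$; passing to polar coordinates (with the substitution $s=\log\tfrac1{|x-x_0|}$) gives $\int_{\R^n}(|w_j|^n+|\nabla w_j|^n)\leq C_n(j^{-n}+j^{1-n})\to 0$. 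In either case $C_p(\{x_0\})=0$; the claim that every countable set has $p$-capacity zero when $1<p\leq n$ then follows from ii).

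For ii), I would glue the individual test functions by taking a maximum. We may assume $\sum_i C_p(E_i)<\infty$. Given $\varepsilon>0$, pick for each $i$ an admissible $u_i\in W^{1,p}(\R^n)$ for $E_i$ with $\int_{\R^n}(|u_i|^p+|\nabla u_i|^p)\leq C_p(E_i)+\varepsilon 2^{-i}$, and replace $u_i$ by $T(u_i)$, where $T(t)=\max(0,\min(t,1))$; since $T$ is $1$-Lipschitz with $T(0)=0$, this keeps $u_i\in W^{1,p}(\R^n)$, does not increase the integral, and preserves $u_i\equiv 1$ near $E_i$, so we may assume $0\leq u_i\leq 1$. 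Set $v_k=\max(u_1,\dots,u_k)$ and $u=\sup_i u_i=\lim_k v_k$. On the (essentially disjoint) sets on which a given $u_i$ realizes the maximum one has $\nabla v_k=\nabla u_i$ a.e., so $\int_{\R^n}(|v_k|^p+|\nabla v_k|^p)\leq\sum_{i=1}^k\int_{\R^n}(|u_i|^p+|\nabla u_i|^p)\leq\sum_i C_p(E_i)+\varepsilon$. Thus $\{v_k\}$ is bounded in $W^{1,p}(\R^n)$ and increases pointwise to $u$; by reflexivity a subsequence converges weakly in $W^{1,p}(\R^n)$, necessarily to $u$, and weak lower semicontinuity of the norm (Fatou for the zeroth order term if one prefers) gives $u\in W^{1,p}(\R^n)$ with $\int_{\R^n}(|u|^p+|\nabla u|^p)\leq\sum_i C_p(E_i)+\varepsilon$. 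Finally, each $u_i$ equals $1$ on an open $U_i\supseteq E_i$ and $0\leq u\leq 1$, so $u\equiv 1$ on the open set $\bigcup_i U_i\supseteq\bigcup_i E_i$ and is admissible for $\bigcup_i E_i$; hence $C_p(\bigcup_i E_i)\leq\sum_i C_p(E_i)+\varepsilon$, and letting $\varepsilon\to 0$ yields the subadditivity. The ``in particular'' is then immediate: if $1<p\leq n$ and $E=\{x_1,x_2,\dots\}$ is countable, $C_p(E)\leq\sum_j C_p(\{x_j\})=0$ by i).

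The only step I expect to need real care is the passage to the limit in ii): one must justify that $u=\sup_i u_i$ still belongs to $W^{1,p}(\R^n)$ with the claimed bound — this is where the a.e. identity $\nabla v_k=\nabla u_i$ on $\{v_k=u_i\}$, reflexivity of $W^{1,p}$ for $1<p<\infty$, and weak lower semicontinuity of the norm all enter — and that $u$ equals $1$ on a genuine open neighborhood of $\bigcup_i E_i$ rather than merely on $\bigcup_i E_i$ itself. By comparison, the explicit integral estimates for the cutoffs in i) and the contraction property of $T$ are entirely routine.
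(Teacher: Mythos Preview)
Your argument is correct in both parts and follows the classical route: Morrey's embedding for $p>n$, explicit radial cutoffs (linear for $p<n$, logarithmic for $p=n$) for the singleton, and the truncate--take-maxima--pass-to-the-limit construction for countable subadditivity. The one place I would tighten is the inequality $\int(|v_k|^p+|\nabla v_k|^p)\leq\sum_{i=1}^k\int(|u_i|^p+|\nabla u_i|^p)$: spell out the a.e.\ partition $A_i=\{v_k=u_i\}\setminus\bigcup_{l<i}\{v_k=u_l\}$ so that the sets are disjoint, since on the overlap $\{u_i=u_j\}$ the gradient identity alone does not pick a unique index. With that said, the weak-compactness passage and the observation that $u\equiv 1$ on the \emph{open} set $\bigcup_i U_i$ are handled correctly.

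As for comparison with the paper: there is nothing to compare. The paper does not prove this proposition at all; it lists it among ``basic properties of $p$-capacity'' and simply refers the reader to Chapter~2 of \cite{hjk}. Your write-up is therefore strictly more than what the paper supplies, and it is essentially the standard textbook proof one would find in that reference.
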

A set of $p$-capacity zero can be described in terms of $p$-harmonic
measure.
\begin{defi}
\label{def:absolutezero} We say that $E\subset\R^n$ is of {\it
absolute $p$-harmonic measure zero} if $\omega_p(E\cap
\bo,\Omega)=0$ for all bounded domains $\Omega\subset \R^n$.
\end{defi}

We state Theorem 11.15 in \cite{hjk}.
\begin{thm}
\label{thm:absoluteequalpcapacity} $E$ is of absolute $p$-harmonic
measure zero if and only if $E$ is of $p$-capacity zero.
\end{thm}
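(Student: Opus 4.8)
The plan is to prove the two implications separately; this equivalence is Theorem~11.15 in \cite{hjk}, so ultimately I would cite that reference, but let me describe how the argument goes, using the circle of ideas around polar sets and $p$-capacitary potentials.

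For the implication ``$C_p(E)=0$ $\Rightarrow$ absolute $p$-harmonic measure zero'', I would fix a bounded domain $\Omega$ and set $E_0=E\cap\bo$, so that $C_p(E_0)=0$ by monotonicity of $p$-capacity. The first step is the standard fact that a set of $p$-capacity zero is polar: there is a nonnegative $p$-superharmonic function $v$ on a bounded open set containing $\closure{\Omega}$ with $v\equiv+\infty$ on $E_0$ (built by balayage from the minimizing sequence in the definition of $C_p$). Next, a positive multiple of a $p$-superharmonic function is again $p$-superharmonic: if $cv\ge h$ on $\partial D$ with $h\in C(\closure D)$ $p$-harmonic in $D\subset\subset\Omega$ and $c>0$, then $v\ge h/c$ on $\partial D$, $h/c$ is again $p$-harmonic, so $v\ge h/c$ and hence $cv\ge h$ in $D$. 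Consequently $cv|_\Omega$ lies in the upper class $\U_{E_0}$ for every $c>0$: it is $p$-superharmonic, bounded below by $0$, and $\liminf_{x\to y}cv(x)=+\infty$ for $y\in E_0$ while $cv\ge0=\chi_{E_0}(y)$ for $y\in\bo\setminus E_0$. Hence $\omega_p(x,E_0,\Omega)\le cv(x)$ for every $c>0$; choosing $x_0\in\Omega$ with $v(x_0)<\infty$ (a $p$-superharmonic function is finite off a set of $p$-capacity zero) and letting $c\to0^+$ gives $\omega_p(x_0,E_0,\Omega)=0$, hence $\omega_p(\cdot,E_0,\Omega)\equiv0$ by Proposition~\ref{prop:phamrnoicmeasure}(i). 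Since $\Omega$ was an arbitrary bounded domain, $E$ is of absolute $p$-harmonic measure zero.

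For the converse I would argue by contraposition: assuming $C_p(E)>0$, I must exhibit one bounded domain $\Omega$ with $\omega_p(E\cap\bo,\Omega)>0$. By Choquet capacitability there is a compact $K\subseteq E$ with $C_p(K)>0$. When $K$ contains a closed ball $\closure{B(z,\rho)}\subset B(0,1)$ (after translating), the argument is transparent: take $\Omega=B(0,2)\setminus\closure{B(z,\rho)}$, a bounded domain, with $K':=\partial B(z,\rho)\subseteq K\cap\bo$ and $C_p(K')>0$, and compare, via the comparison principle, the upper Perron solution $\omega_p(\cdot,K',\Omega)=\Hu_{\chi_{K'}}$ with the $p$-capacitary potential $u$ of $\closure{B(z,\rho)}$ in $B(0,2)$. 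Restricted to $\Omega$, $u$ is $p$-harmonic, satisfies $0\le u\le1$, tends to $0$ at $\partial B(0,2)$ by regularity of the ball, and $\limsup_{x\to y}u(x)\le1=\chi_{K'}(y)$ at every $y\in K'$; hence $u\in\Lo_{\chi_{K'}}$, so $\omega_p(\cdot,K',\Omega)\ge\Hl_{\chi_{K'}}\ge u$ in $\Omega$, and $u>0$ in $\Omega$ by the strong maximum principle. Therefore $\omega_p(E\cap\bo,\Omega)\ge\omega_p(K',\Omega)\ge u>0$ by Proposition~\ref{prop:phamrnoicmeasure}(ii), and $E$ is not of absolute $p$-harmonic measure zero. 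The general case --- $K$ of positive $p$-capacity without interior --- is treated the same way after replacing $K$ by the boundary of its capacitary hull in a large ball and using that the associated $p$-capacitary potential is nontrivial precisely because $C_p(K)>0$; this bookkeeping is carried out in \cite{hjk}. (When $p>n$ there is no nonempty set of $p$-capacity zero, while $\omega_p(\{x_0\},\Omega)>0$ for a suitable $\Omega$ as in Example~\ref{ex:notperturabtion}, so both conditions force $E=\emptyset$ and the equivalence is automatic; the substantive range is $1<p\le n$.)

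The step I expect to be the main obstacle --- and the reason a write-up would simply cite \cite{hjk} --- consists of the two ``dictionary'' statements linking $p$-capacity with potential theory on which the sketch rests: (a) $C_p(E)=0$ precisely when some $p$-superharmonic function equals $+\infty$ on $E$, together with the fact that such a function is finite $p$-quasi-everywhere; and (b) the nontriviality of the $p$-capacitary potential of a compact set of positive $p$-capacity and its comparison, via the comparison principle, with the $p$-harmonic measure of that set from the complementary domain. Neither statement is deep, but each rests on the quasi-topological machinery developed in Chapters~10--11 of \cite{hjk}; the residual issues (Choquet capacitability when $E$ is not Borel, and the point-set bookkeeping that puts a set of positive $p$-capacity onto $\bo$ when $K$ is less regular than a ball) are routine by comparison.
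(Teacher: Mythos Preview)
Your proposal is entirely appropriate: the paper does not supply its own proof of this statement but simply records it as Theorem~11.15 in \cite{hjk}, and you correctly identify this citation as the ultimate justification. Your accompanying sketch of the argument via polar sets and capacitary potentials is a faithful outline of how that proof in \cite{hjk} proceeds, so there is nothing to compare or correct.
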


When $1<p\leq n$ and $E\subset\bo$ is of $p$-capacity zero,
Bj\"orn-Bj\"orn-Shanmugalingam showed the following result for a boundary
perturbation problem.
\begin{thm}{\rm (Bj\"orn-Bj\"orn-Shanmugalingam \cite{bj2})}
\label{thm:bjornpcapcityzero} Assume that $f\in C(\bo)$ and $g=f$ on
$\bo$ except a set of $p$-capacity zero. Then $g$ is resolutive and
$$H_g=H_f.$$
\end{thm}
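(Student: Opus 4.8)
The plan is to reduce the assertion to a single inequality between upper Perron solutions, to dispose of the part of the boundary datum that is supported on the exceptional set by an elementary monotonicity argument, and then to bridge the remaining gap --- which the nonlinearity of $\Dp$ makes genuinely delicate --- by invoking Sobolev-space potential theory.

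\emph{Reductions.} We may assume $f,g$ are bounded, say $|f|,|g|\le M$, and put $E=\{x\in\bo:f(x)\neq g(x)\}$, so that $C_p(E)=0$. By Theorem \ref{thm:absoluteequalpcapacity}, $E$ is of absolute $p$-harmonic measure zero, hence $\omega_p(\cdot,E,\Omega)=\Hu_{\chi_E}\equiv 0$ in $\Omega$. Since $f\in C(\bo)$ is resolutive (by Theorem \ref{thm:semicontinuous-resolutive on the regular} when $\Omega$ is regular, and by the classical resolutivity theorem for continuous data on a bounded domain in general; see \cite{hjk}), $H_f=\Hl_f=\Hu_f$ is $p$-harmonic. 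Because $-f,-g$ satisfy the same hypotheses and $\Hl_g=-\Hu_{-g}$, it suffices to prove $\Hu_g\le\Hu_f$: combining this with the inequality obtained for the pair $(-f,-g)$ and with $\Hl_g\le\Hu_g$ yields $\Hl_g=\Hu_g=H_f$.

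\emph{The exceptional datum is Perron-null.} For $c>0$, $v$ is $p$-superharmonic if and only if $cv$ is, so $\U_{c\chi_E}=c\,\U_{\chi_E}$ and therefore $\Hu_{c\chi_E}=c\,\omega_p(\cdot,E,\Omega)=0$. Set $\psi:=g-f$; then $\psi=0$ on $\bo\setminus E$ and $|\psi|\le 2M$ on $E$, i.e. $-2M\chi_E\le\psi\le 2M\chi_E$ on $\bo$. Monotonicity of the Perron solutions then gives $\Hu_\psi\le\Hu_{2M\chi_E}=0$ and $\Hl_\psi\ge\Hl_{-2M\chi_E}=-\Hu_{2M\chi_E}=0$, and since $\Hl_\psi\le\Hu_\psi$ we conclude that $\psi$ is resolutive with $H_\psi=0$. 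This is the precise sense in which $g$ differs from $f$ by ``nothing''.

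\emph{The nonlinear bridge.} It remains to pass from ``$g-f$ is Perron-null'' to ``$H_g=H_f$''. The naive move --- take $u\in\U_f$ and a small corrector $w\in\U_{2M\chi_E}$ (which exists since $\Hu_{2M\chi_E}=0$) and observe that $u+w$ satisfies every requirement for membership in $\U_g$ --- breaks down because, for $p\neq 2$, the sum of two $p$-superharmonic functions need not be $p$-superharmonic, so $u+w\notin\U_g$ in general. Overcoming this absence of a superposition principle is the heart of the matter, and it is where \cite{bj2} does the real work, through the Sobolev framework: (i) for boundary data that are traces of $W^{1,p}$ functions the Perron solution coincides with the variational ($p$-energy minimizing) solution; (ii) two $W^{1,p}$ functions agreeing quasi-everywhere --- in particular, outside a set of $p$-capacity zero --- have the same trace and hence the same variational solution; and (iii) using quasicontinuity together with the Kellogg property (the irregular points of $\bo$ form a set of $p$-capacity zero), one reduces the general continuous datum $f$ to the Sobolev case by uniform approximation, the approximation errors being absorbed by the stability estimate $\|H_{\phi_1}-H_{\phi_2}\|_{\infty}\le\|\phi_1-\phi_2\|_{\infty}$. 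Assembling (i)--(iii) produces $\Hu_g\le H_f$, and with its mirror inequality the theorem follows. I expect this last step --- rendering the capacity-zero set genuinely invisible in the absence of additivity --- to be the only substantial obstacle; the reductions and the Perron-null computation above are routine.
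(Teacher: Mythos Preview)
The paper does not prove this theorem; it is stated as a result of Bj\"orn--Bj\"orn--Shanmugalingam \cite{bj2} and invoked as a black box, so there is no ``paper's own proof'' to compare against.

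Your proposal is a correct diagnosis of the structure and the difficulty, but it is not a proof. The first two parts --- the symmetry reduction to $\Hu_g\le H_f$, and the computation showing $H_\psi=0$ for $\psi=g-f$ --- are fine (modulo the unjustified reduction to bounded $g$; the theorem as stated places no boundedness hypothesis on $g$). The third part, however, is an outline of someone else's argument rather than an argument: you explicitly say ``this is where \cite{bj2} does the real work'' and then list steps (i)--(iii) without carrying any of them out. In particular, you never actually establish $\Hu_g\le H_f$; you only explain why the naive additive approach fails and sketch which machinery would succeed. That is a helpful roadmap, but the gap you name --- passing from $H_{g-f}=0$ to $H_g=H_f$ without superposition --- remains a gap in your write-up. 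If your intent was to prove the theorem, the Sobolev-variational identification, the quasicontinuity/Kellogg argument, and the approximation step all need to be executed, not merely cited.
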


\begin{cor}
\label{cor:plessthanninvariant} Let $1<p\leq n$ and let
$\Omega\subset\R^n$ be a bounded domain.  Every point on $\bo$ is a
perturbation point.
\end{cor}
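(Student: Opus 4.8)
The plan is to deduce Corollary~\ref{cor:plessthanninvariant} directly from the machinery already assembled, so that no new analysis is needed. Fix $1<p\le n$, a bounded domain $\Omega\subset\R^n$, and a point $x_0\in\bo$. I want to show $x_0$ is a perturbation point in the sense of Definition~\ref{def:perturbation}, i.e. that $H_f=H_g$ whenever $f\in C(\bo)$ and $g$ is bounded on $\bo$ with $g=f$ on $\bo\setminus\{x_0\}$.

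The first step is the observation that a single point carries no $p$-capacity: by Proposition~\ref{prop:onepointcapacity}(i), since $1<p\le n$, the singleton $\{x_0\}$ has $C_p(\{x_0\})=0$. Thus $g$ and $f$ agree on $\bo$ outside a set of $p$-capacity zero, namely outside $\{x_0\}$. The second step is simply to invoke Theorem~\ref{thm:bjornpcapcityzero} of Bj\"orn--Bj\"orn--Shanmugalingam with $E=\{x_0\}$: it gives at once that $g$ is resolutive and that $H_g=H_f$. (One should note that $H_g$ is also well-defined a priori by the Remark following Theorem~\ref{thm:semicontinuous-resolutive on the regular}, since $g$ is bounded and continuous except at one point, but in fact resolutivity here is part of the conclusion of Theorem~\ref{thm:bjornpcapcityzero}.) Since $f$ and $g$ were arbitrary subject to the hypotheses of Definition~\ref{def:perturbation}, this shows $x_0$ is a perturbation point, and since $x_0\in\bo$ was arbitrary, every boundary point is a perturbation point.

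There is essentially no obstacle here: the corollary is a routine specialization of the cited theorem to the case where the exceptional set is a point, using only the fact that points are $p$-thin (have zero $p$-capacity) exactly when $p\le n$. The one thing worth flagging is that the statement requires $\Omega$ to be a bounded domain so that Theorem~\ref{thm:bjornpcapcityzero} applies (that theorem is stated for bounded $\Omega$ and, implicitly, with the standing assumption $1<p\le n$ that lets $p$-capacity-zero sets be nontrivial); no regularity of $\Omega$ is needed beyond what Theorem~\ref{thm:bjornpcapcityzero} already presupposes. The contrast with Example~\ref{ex:notperturabtion}, where $p>n$ forces $C_p(\{0\})>0$ and the perturbation fails, makes clear that the hypothesis $p\le n$ is exactly what is being exploited.

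Accordingly, the proof is two lines: apply Proposition~\ref{prop:onepointcapacity}(i) to see $C_p(\{x_0\})=0$, then apply Theorem~\ref{thm:bjornpcapcityzero} with the exceptional set $\{x_0\}$. This is the expected structure, and I would present it exactly that way, with a sentence recording that the conclusion holds for every $x_0\in\bo$ and hence yields the stated corollary.
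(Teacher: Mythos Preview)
Your proof is correct and is exactly the argument the paper intends: the corollary is stated immediately after Theorem~\ref{thm:bjornpcapcityzero} with no explicit proof, precisely because it follows at once from Proposition~\ref{prop:onepointcapacity}(i) (a singleton has $p$-capacity zero when $1<p\le n$) together with Theorem~\ref{thm:bjornpcapcityzero}.
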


\begin{defi}
We say that $x_0\in\bo$ is an {\it exterior ray point} if there is a
line segment, $\mathcal L$ such that $x_0\in \mathcal L$ and
$\mathcal L\subset \R^n\setminus \Omega$.
\end{defi}
 For instance, if
$\Omega=B(0,1)\setminus\{0< x <1\}\subset \R^n$, $0$ is an exterior
ray point.

\begin{thm}{\rm (Bj\"orn \cite{Bjorn})}
\label{thm:bjornexterior} Let $1<p<\infty$ and let $\Omega\subset
\R^n$ be a bounded domain. An exterior ray point is a perturbation
point.
\end{thm}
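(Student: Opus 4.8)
The plan is to reduce the claim to a local barrier-type estimate near $x_0$ and then invoke the comparison principle. By definition of a perturbation point I must show that if $f\in C(\bo)$ and $g$ is a bounded function with $g=f$ on $\bo\setminus\{x_0\}$, then $H_f=H_g$. Since replacing $g$ by $g-f$ (after extending $f$ continuously, which does not change the problem because adding a resolutive continuous function is harmless) and by linearity one would like to assume $f\equiv 0$ and $g=c\,\chi_{\{x_0\}}$ for a constant $c$; by scaling and sign symmetry ($\U_{-g}=-\Lo_g$) it suffices to treat $g=\chi_{\{x_0\}}$ and to prove $\Hu_g=0$, i.e.\ $\omega_p(x_0,\Omega)=0$ with $\{x_0\}$ as the singleton. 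In other words, the whole theorem collapses to: \emph{an exterior ray point has $p$-harmonic measure zero}, equivalently the singleton $\{x_0\}$ is negligible for the upper Perron solution. Given that and $\Hl_g\ge 0$ trivially, the comparison principle yields $\Hu_g=\Hl_g=0=H_f$.

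The heart of the matter is therefore the construction of admissible functions $u\in\U_{\chi_{\{x_0\}}}$ with $u(x)\to 0$ away from $x_0$ and $u$ small at any fixed interior point. First I would normalize so that $x_0=0$ and the exterior line segment $\mathcal L$ lies along a coordinate axis, say $\mathcal L=\{(t,0,\dots,0):0\le t\le \delta\}\subset\R^n\setminus\Omega$ for some $\delta>0$. The natural candidate is a radially symmetric $p$-superharmonic function built from the fundamental solution of the $p$-Laplacian relative to the ``slit'' geometry: because a whole segment, not just a point, is removed from the complement, one can use the fact that in $\R^n$ a line segment carries positive $p$-capacity for \emph{every} $p>1$ (unlike a point, which is null exactly when $p\le n$). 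Concretely I would take $u_\varepsilon$ to be (a truncation of) the $p$-capacitary potential of a small tubular neighborhood of the segment $\mathcal L\cap B(0,\varepsilon)$ computed in a large ball, restricted to $\Omega$; this is $p$-harmonic off the segment, equals $1$ near $0$, is bounded below, and tends to $0$ on $\bo\setminus\{0\}$ as the tube shrinks. Letting the tube thickness and then $\varepsilon$ go to zero, the $p$-capacity of the segment $\to 0$, hence the capacitary potentials $\to 0$ locally uniformly in $\Omega$, forcing $\inf\U_{\chi_{\{0\}}}=0$.

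The main obstacle will be this capacity/potential estimate: showing that the relevant potentials decay to $0$ as the removed set degenerates from a segment to a point, uniformly on compact subsets of $\Omega$. One must be careful that $0$ is a regular boundary point (it is, being an exterior ray point, hence satisfying an exterior cone/segment condition, so a genuine barrier exists), and one must check that the constructed $u_\varepsilon$ genuinely lies in the upper class $\U_{\chi_{\{0\}}}$ — in particular that $\liminf_{x\to y}u_\varepsilon(x)\ge 0$ at every $y\in\bo$ (clear, since $u_\varepsilon\ge 0$) and $\liminf_{x\to 0}u_\varepsilon(x)\ge 1$ (clear, since $u_\varepsilon\equiv 1$ on a neighborhood of the segment including $0$). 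The quantitative decay can be extracted either from the explicit radial $p$-harmonic functions $r^{(p-n)/(p-1)}$ (or $\log r$ when $p=n$) used as comparison barriers around the thin tube — exactly the computation underlying Example 1.? — or, following Peres–Sheffield, from a tug-of-war-with-noise argument showing the game started at an interior point almost surely exits $\bo$ far from $x_0$. I would present the deterministic capacitary version as the cleanest route, since all needed facts (positivity of segment capacity for all $p>1$, continuity of capacity under monotone limits, Theorem on $C_p$ subadditivity) are already available, and defer the probabilistic reformulation to the later sections where tug-of-war is set up.
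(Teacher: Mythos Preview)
This theorem is not proved in the paper; it is quoted from Bj\"orn \cite{Bjorn} as background in Section~\ref{sec:def}. The present paper recovers it as a special case of Theorem~\ref{thm:gameneg}, since an exterior ray point trivially satisfies condition~iii) there (the segment supplies points of $\R^n\setminus\Omega$ arbitrarily close to $x_0$).

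Your proposal has two genuine gaps. First, the reduction ``by linearity'' to $f\equiv 0$, $g=\chi_{\{x_0\}}$ is invalid: the $p$-Laplacian is nonlinear for $p\neq 2$, so neither $H_{f+h}=H_f+H_h$ nor ``a sum of $p$-superharmonic functions is $p$-superharmonic'' holds, and ``adding a resolutive continuous function'' is \emph{not} harmless. Showing that $\omega_p(\{x_0\},\Omega)=0$ implies $x_0$ is a perturbation point is not a formality; it is exactly Theorem~\ref{thm:pharmoniczeroequivinvariant}, whose proof in this paper goes through the game-theoretic machinery of Sections~\ref{sec:tug-of-war}--\ref{sec:main result}.

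Second, your capacitary construction fails precisely in the interesting range $p>n$. You assert that as $\varepsilon\to 0$ ``the $p$-capacity of the segment $\to 0$, hence the capacitary potentials $\to 0$.'' But when $p>n$ a single point already has positive $p$-capacity (Proposition~\ref{prop:onepointcapacity}), so $C_p\bigl(\mathcal L\cap B(0,\varepsilon)\bigr)\ge C_p(\{0\})>0$ and the potentials do \emph{not} tend to zero. Your argument therefore only covers $1<p\le n$, where every boundary point is a perturbation point anyway (Corollary~\ref{cor:plessthanninvariant}) and the exterior-ray hypothesis is irrelevant. The role of the exterior ray when $p>n$ is not to make capacity small but to guarantee complement points arbitrarily close to $x_0$; the paper exploits this through an iterated tug-of-war strategy and the scaling invariance of $p$-harmonic measure in punctured annuli (proof of Theorem~\ref{thm:gameneg}), not through capacity.
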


\begin{thm}{\rm (Bj\"orn \cite{Bjorn})}
\label{thm:pmeasureneq} Let $1<p<\infty$ and let $\Omega\subset\R^n$
be a bounded domain. Let $E\subset\bo$ be a countable set whose
elements are perturbation points of $\Omega$. If $f\in C(\bo)$ and
$g=f$ on $\bo\setminus E$, then $g$ is resolutive and
\begin{equation}
\label{eqn:bjorn} H_g=H_f.
\end{equation}
In particular, when $E$ consists of exterior ray points,
\eqref{eqn:bjorn} holds.
\end{thm}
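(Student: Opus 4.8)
The plan is to reduce the whole statement to the single assertion that $E$ has $p$-harmonic measure zero, i.e.\ $\omega_p(\cdot,E,\Omega)\equiv0$, and then to prove that. Granting it, put $M:=\sup_{\bo}|g-f|<\infty$; since $g=f$ on $\bo\setminus E$ we have $f-M\chi_E\le g\le f+M\chi_E$ on $\bo$. The upper Perron solution is monotone, positively homogeneous, and subadditive (if $u\in\U_a$ and $v\in\U_b$ then $u+v\in\U_{a+b}$, hence $\Hu_{a+b}\le\Hu_a+\Hu_b$); using also that the continuous function $f$ is resolutive,
\[
\Hu_g\le\Hu_f+\Hu_{M\chi_E}=H_f+M\,\omega_p(\cdot,E,\Omega)=H_f,\qquad
\Hl_g\ge\Hl_f-\Hu_{M\chi_E}=H_f .
\]
Since $\Hl_g\le\Hu_g$ always, this forces $\Hl_g=\Hu_g=H_f$, so $g$ is resolutive and $H_g=H_f$. (The reduction loses nothing: applying the conclusion to $g=f+M\chi_E$ and letting $M\to\infty$ shows $\omega_p(\cdot,E,\Omega)\equiv0$ is also necessary.)

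To prove $\omega_p(\cdot,E,\Omega)\equiv0$, note first that a single perturbation point $x_0$ is null: taking $f\equiv0$ and $g=\chi_{\{x_0\}}$ in Definition \ref{def:perturbation} gives $\omega_p(\cdot,\{x_0\},\Omega)=\Hu_{\chi_{\{x_0\}}}=H_0=0$. Hence every element of $E=\{x_1,x_2,\dots\}$ is null as a singleton, and for each finite truncation $E_m=\{x_1,\dots,x_m\}$, Theorem \ref{compact sets} applied inductively to the pairwise disjoint closed null sets $\{x_1\},\dots,\{x_m\}$ yields $\omega_p(\cdot,E_m,\Omega)\equiv0$.

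The genuine work is the passage from the finite sets $E_m$ to $E$. Fix $x_0\in\Omega$ and $\ep>0$; it suffices to produce one $u\in\U_{\chi_E}$ with $u(x_0)<\ep$, for then $\omega_p(x_0,E,\Omega)=0$ and Proposition \ref{prop:phamrnoicmeasure}(i) upgrades this to $\omega_p(\cdot,E,\Omega)\equiv0$. By Proposition \ref{prop:semidecreasing}(ii), $\omega_p(x_0,\overline U,\Omega)\to\omega_p(x_0,\{x_j\},\Omega)=0$ as a relatively open neighbourhood $U\ni x_j$ in $\bo$ shrinks to $\{x_j\}$ (since $\chi_{\overline U}\downarrow\chi_{\{x_j\}}$), so one may pick $U_j\ni x_j$ and a barrier $0\le\psi_j\le1$ with $\psi_j\in\U_{\chi_{\overline{U_j}}}$ and $\psi_j(x_0)<\ep\,2^{-j}$; the function $u$ should encode ``all the spikes $\psi_j$ at once''. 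Combining them is the main obstacle: because $\Delta_p$ is nonlinear, $\sum_j\psi_j$ need not be $p$-superharmonic, and $\sup_j\psi_j$ destroys superharmonicity, so the naive constructions fail. I would instead obtain $u$ as an increasing limit of lower-semicontinuous regularisations of infima of finitely-spiking barriers, using that the lsc-regularisation of the infimum of a family of $p$-superharmonic functions is $p$-superharmonic and that an increasing limit of $p$-superharmonic functions is $p$-superharmonic unless $\equiv+\infty$ (excluded by $u(x_0)<\ep$); equivalently, establish a monotone-convergence statement for the Perron solutions of the functions $g_m$ equal to $g$ on $E_m$ and to $f$ on $\bo\setminus E_m$, each of which satisfies $H_{g_m}=H_f$ by the finite case. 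Arranging the spike conditions at every $x_j$ to survive this limiting procedure in the nonlinear setting is where the real content lies.

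Finally, the ``in particular'' clause is immediate from Theorem \ref{thm:bjornexterior}: exterior ray points are perturbation points, so a countable set of exterior ray points is covered by the statement just proved.
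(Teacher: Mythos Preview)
The paper does not prove this theorem; it is quoted from Bj\"orn's paper \cite{Bjorn} as a known result, so there is no proof in the present paper to compare against. I therefore comment only on the correctness of your proposal.

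Your reduction step is wrong, and for exactly the reason you yourself flag later. You claim the upper Perron solution is subadditive via ``if $u\in\U_a$ and $v\in\U_b$ then $u+v\in\U_{a+b}$''. That implication requires $u+v$ to be $p$-superharmonic, which is \emph{false} for $p\neq 2$: the class of $p$-superharmonic functions is not closed under addition. In fact, if your subadditivity inequality $\Hu_{a+b}\le\Hu_a+\Hu_b$ were valid, then $\omega_p(\cdot,E\cup F,\Omega)\le\omega_p(\cdot,E,\Omega)+\omega_p(\cdot,F,\Omega)$ would hold for all $E,F\subset\bo$, and $p$-harmonic measure would be subadditive on null sets --- directly contradicting the Llorente--Manfredi--Wu counterexample cited in the introduction. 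You acknowledge precisely this nonlinearity obstruction in your last paragraph (``$\sum_j\psi_j$ need not be $p$-superharmonic''), but do not notice that it already kills the inequality $\Hu_g\le H_f+M\,\omega_p(\cdot,E,\Omega)$ on which your whole reduction rests. So even granting $\omega_p(\cdot,E,\Omega)\equiv0$, you have not deduced $H_g=H_f$.

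There are two smaller issues as well. Your appeal to Theorem~\ref{compact sets} for finite unions requires $\Omega$ to be regular, which is not assumed in the statement (regularity is automatic only for $p>n$). And the passage from finite $E_m$ to countable $E$ is, by your own admission, only a sketch; the suggested device of lsc-regularised infima produces functions that are $p$-superharmonic but have \emph{smaller}, not larger, boundary liminf, so it does not obviously yield an element of $\U_{\chi_E}$. Bj\"orn's actual argument avoids the additive route entirely and works directly with the comparison principle and the perturbation-point hypothesis at each $x_j$; the nonlinearity is circumvented rather than confronted.
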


Note that a major part in Theorem \ref{thm:pmeasureneq} is when
$p>n$. When $1<p\leq n$, Theorem \ref{thm:pmeasureneq} is just a
consequence of Theorem \ref{thm:bjornpcapcityzero} because the
$p$-capacity of a countable set is always zero. Also note that we
neither require $g$ to be bounded nor to be continuous on $\{x\in
\bo: g(x)=f(x)\}$.

\section{Tug-of-war with noise and game-perturbation points}
\label{sec:tug-of-war} When $p=2$, it is discovered by Kakutani
\cite{kakutani} that the Dirichlet problem can be solved in a
probabilistic way; $u(x)=\mathbb E_x(f(B_{\tau}))$ where $\mathbb
E_x$ stands for the expected value when a Brownian motion $B$ starts
at $x$ and runs until hitting time $\tau$ of $\bo$. However, when
$p\neq 2$, a probabilistic interpretation of $p$-harmonic functions
has remained unknown until recently Peres-Sheffield's works. (See
also \cite{Manfredi2}.) Their works were initiated to figure out the
behaviors of two-player random turn games like a random turn hex
\cite{Hex}. After some further research, they found that the value of a
two-player random turn game is related to the $\infty$-Laplace
equation, $\Delta_{\infty}{u}:=|\nabla
u|^{-2}\Sigma_{i,j}u_iu_{i,j}u_j=0$, and named the game {\it
tug-of-war} \cite{PSSW}. By noticing $\Delta_pu=|\nabla u|^{p-2}\{
\Delta u + (p-2)\Delta_{\infty}{u} \}$, they finally showed that a
variant of tug-of-war, called {\it tug-of-war with noise}, gives a
probabilistic solution to the Dirichlet problem
(\ref{eqn:Dirichlet}).

In this section, we give a quick summary of tug-of-war with noise
and apply it to characterize a perturbation point in a probabilistic
way.\\

\noindent{\bf Tug-of-war with noise} Let $\Omega\subset\R^n$ be
bounded. Let $\alpha = 1 + \sqrt{(n-1)/(p-1)}$ and let
$f:\bo\rightarrow\R$ be the terminal payoff function. The game is
played as follows: At the $k$th step, a fair coin is tossed, and the
winning player is allowed to make a move $v$ with $|v| \leq
\epsilon$. If $ \mathrm{dist}(x_{k-1},
\partial \Omega)> \alpha \epsilon$, then the moving player chooses
$v_k \in \R^n$ with $|v_k|\leq \epsilon$ and sets $x_k = x_{k-1}+v_k
+ z_k$ where $z_k$ is a random ``noise" vector whose law is the
uniform distribution on the sphere of radius
$|v_k|\sqrt{(n-1)/(p-1)}$ in the hyperplane orthogonal to $v_k$.
(Here we chose a simple noise vector. See
\cite{ps} for more details of a noise vector.) If
$\mathrm{dist}(x_{k-1},
\partial \Omega) \leq \alpha \epsilon$, then the moving player
chooses an $x_k \in \partial \Omega$ with $|x_k - x_{k-1}| \leq
\alpha \epsilon$ and the game ends, with player I receiving a payoff
of $f(x_k)$ from player II. Both players receive a payoff of zero if
the game never terminates.

\begin{defi}
A {\it strategy} for players is a way of choosing the player's next
move as a function of all previously played moves and all coin
tosses. More precisely it is a sequence of Borel-measurable maps
from $\Omega\times(\overline {B(0,\ep)}\times \overline \Omega)^k$
to $\overline {B(0,\ep)}$, giving the move a player would make at
the $k$th step of the game as a function of the game history.
\end{defi}
Note that a pair of strategies $\sigma=(S_I, S_{II})$ (where $S_I$
is a strategy for player I and $S_{II}$ is a strategy for player II)
and a starting point $x$ determine a unique probability measure
$\mathbb P_x$ on the space of game position sequences. Let us denote
the corresponding expectation by $\mathbb E_x$.

\begin{defi}
The {\it value of the game for player I} at $x$ is defined by
$u_1^\eps(x) =\sup_{S_I}$ $\inf_{S_{II}} V_x (S_I, S_{II}) $ and the
{\it value of the game for player II} at $x$ is defined by
$u_2^\eps(x) = \inf_{S_{II}} \sup_{S_I} V_x (S_I, S_{II}) $ where
 $V_x (S_I, S_{II}) = \mathbb{E}_x
\Bigl[f(x_\tau) \mathbf{ \chi}_{\{\tau<\infty\}}\Bigr]$ is the
expected payoff and $\tau$ is the exit time of $\Omega$.
\end{defi}
By definitions, we always have $u_1^\eps(x)\leq u_2^\eps(x)$.

\begin{defi}
\label{def:gameregular}
 $x_0\in\bo$ is called a {\it game-regular} point of
 $\Omega$ if for every $\delta >0$
and $\eta >0$ there exists a $\delta_0$ and $\epsilon_0$ such that
for every $x\in \Omega\cap B(x_0,\delta_0)$ and $\epsilon <
\epsilon_0$, player I has a strategy that guarantees that an
$\epsilon$-step game started at $x$ will terminate at a point on
$\partial \Omega \cap B(x_0,\delta)$ with probability at least $1 -
\eta$. $\Omega$ is {\it game-regular} if every $x \in
\partial \Omega$ is game-regular.
\end{defi}

The main results in \cite{ps} are the followings.

\begin{thm}{\rm(Peres-Sheffield \cite{ps})}
\label{thm:sufficientconditiongameregular} Let $1<p<\infty$ and let
$\Omega$ be a bounded domain in $\R^n$.
\begin{itemize}
\item[i)] If $p
> n$, then $\Omega$ is game-regular.
\item[ ii)] If $\Omega$ satisfies an exterior cone condition
at every point $x\in\bo$, then $\Omega$ is game-regular.
\item[ iii)] If $n=2$ and $\Omega$ is simply connected, then $\Omega$ is
game-regular.
\end{itemize}
\end{thm}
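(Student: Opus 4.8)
The plan is to reduce all three parts to a single mechanism: \emph{if a boundary point $x_0\in\bo$ admits a barrier — a bounded function $w$ on $\Omega\cap B(x_0,r_0)$ that is $p$-superharmonic and positive there, with $\liminf_{x\to x_0}w(x)=0$ and with $w$ bounded below by a positive constant away from $x_0$ — then $x_0$ is game-regular.} Granting this, each hypothesis supplies barriers at every boundary point. For $p>n$ the explicit radial barrier below works (and $\Omega$ is anyway regular, as recalled in Section~\ref{sec:def}); an exterior cone at $x_0$ gives the classical $p$-harmonic barrier on the complementary solid angle, namely a positive power of the distance to the apex times the angular $p$-harmonic profile, which is smooth away from $x_0$; and when $n=2$ and $\Omega$ is bounded and simply connected, $\R^2\setminus\Omega$ is connected, unbounded and contains $x_0$, hence contains a continuum joining $x_0$ to $\R^2\setminus B(x_0,2t)$ for each small $t$, so that the relative $p$-capacity of $(\R^2\setminus\Omega)\cap\overline{B(x_0,t)}$ in $B(x_0,2t)$ is comparable to that of the whole ball, the $p$-Wiener integral at $x_0$ diverges, and a barrier exists (\cite{hjk}). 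Thus the content is entirely in the mechanism.

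To prove the mechanism, I would first replace $w$ by a smooth $w_1\le w$ on a slightly smaller annular region $G:=\{\rho_1<|x-x_0|<r_0\}\cap\Omega$ which is a \emph{strict} $p$-supersolution there, $\Delta_p w_1\le-c_0<0$ with $|\nabla w_1|\ge c_1>0$ throughout $G$, still with $w_1(x)\to0$ as $x\to x_0$ (possible by the standard approximation of $p$-superharmonic functions from below by smooth supersolutions, together with subtracting a small strictly $p$-subharmonic term; the strict inequalities are what force termination and control player II). Player I's strategy: while the token $x$ lies in $G$ at distance more than $\alpha\epsilon$ from $\bo$, pull by $v=-\epsilon\,\nabla w_1(x)/|\nabla w_1(x)|$; otherwise play arbitrarily. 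For \emph{every} response of player II one bounds $\mathbb{E}[w_1(x_k)\mid x_{k-1}]\le\tfrac12\,\mathbb{E}_z w_1(x_{k-1}+v+z)+\tfrac12\,\sup_{|v'|\le\epsilon}\mathbb{E}_z w_1(x_{k-1}+v'+z)$; because $|\nabla w_1|\ge c_1>0$ the first-order term pins both the realized move $v$ and the maximizing $v'$ to $\pm\epsilon\,\nabla w_1/|\nabla w_1|$ up to $O(\epsilon^2)$, while the noise (squared length $\epsilon^2(n-1)/(p-1)$, orthogonal to the move) supplies the Laplacian in the complementary $n-1$ directions, so a Taylor expansion collapses to
\[ \mathbb{E}[w_1(x_k)\mid x_{k-1}] \;\le\; w_1(x_{k-1}) + \frac{\epsilon^2}{2(p-1)}\bigl(|\nabla w_1|^{2-p}\Delta_p w_1\bigr)(x_{k-1}) + O(\epsilon^3) \;\le\; w_1(x_{k-1}) - c\,\epsilon^2, \]
using $\Delta_p u=|\nabla u|^{p-2}(\Delta u+(p-2)\Delta_{\infty}u)$ and the strict bounds on $w_1$. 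Hence $w_1(x_k)$ is a supermartingale up to the first time $\tilde\tau$ the token comes within $\alpha\epsilon$ of $\bo$ or reaches $\{|x-x_0|\ge r_0-\alpha\epsilon\}$; optional stopping gives $\mathbb{E}[w_1(x_{\tilde\tau})]\le w_1(x)$, the strict drift $-c\epsilon^2$ forces $\tilde\tau<\infty$ almost surely, and the lower bound of the barrier away from $x_0$ means that a small value of $w_1(x_{\tilde\tau})$ forces $x_{\tilde\tau}$ close to $x_0$. So for fixed $\delta$ and small $\epsilon$ the event that the game fails to terminate in $\bo\cap B(x_0,\delta)$ is contained in $\{w_1(x_{\tilde\tau})\ge m'(\delta)\}$ with $m'(\delta)>0$, of probability $\le w_1(x)/m'(\delta)$ by Markov's inequality; since $w_1(x)\to0$ as $x\to x_0$, shrinking $\delta_0$ makes this $<\eta$. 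That is game-regularity at $x_0$, and $x_0$ was arbitrary.

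For part (i) the barrier is explicit: $w(x)=|x-x_0|^{\beta}$ with $\beta=\frac{p-n}{p-1}\in(0,1)$, which is $p$-harmonic on $\R^n\setminus\{x_0\}$, positive, bounded below away from $x_0$, and — precisely because $p>n$ forces $\beta>0$ — tends to $0$ at $x_0$. It is worth noting that the noise radius $\epsilon\sqrt{(n-1)/(p-1)}$ is tuned exactly so that $w(x_k)$ is a discrete \emph{martingale} under the radial pull: the second-order term of the expansion is proportional to $\tfrac{n-1}{p-1}+\beta-1$, which vanishes identically. Parts (ii) and (iii) then simply feed the cone barrier, respectively the Wiener barrier, into the mechanism after mollifying them into strict supersolutions as above; note that for $n=2$ the case $p>2$ is already covered by (i), so (iii) genuinely adds only $1<p\le2$, which is exactly where a single boundary point becomes negligible and one must exploit the exterior continuum.

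I expect the main obstacle to lie entirely inside the mechanism, in two linked places. First, the Taylor remainder is only $O(\epsilon^3)$ \emph{per step}, whereas the number of steps before $\tilde\tau$ is typically of order $\epsilon^{-2}$; one must therefore guarantee that the per-step error is dominated by the strict drift $-c\epsilon^2$ \emph{uniformly} in $\epsilon$ and in the token's position — which is exactly why the abstract barrier must be upgraded to a smooth strict $p$-supersolution with $|\nabla w_1|$ and the ellipticity controlled on all of $G$, and why the mollification has to be handled carefully near the outer sphere $\{|x-x_0|=r_0\}$ and near $x_0$ (where $\nabla w$ may degenerate), with $\rho_1$ eventually sent to $0$ so that the artificial face $\{|x-x_0|=\rho_1\}$ contributes nothing, using $\liminf_{x\to x_0}w=0$. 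Second, player I controls the move only on winning tosses while the noise is added regardless, so one must verify that player II cannot exploit curvature to defeat the supermartingale property; this is resolved by $|\nabla w_1|\ge c_1>0$ pinning the maximizing move to $+\epsilon\nabla w_1/|\nabla w_1|$ up to a harmless $O(\epsilon^2)$, but checking this uniformly over $G$ is the technical heart. Everything else — optional stopping, Markov's inequality, the explicit radial barrier, and the classical regularity/Wiener inputs from \cite{hjk} — is routine.
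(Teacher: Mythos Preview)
The paper does not prove this theorem at all: it is stated as a result of Peres--Sheffield and cited to \cite{ps} without proof, so there is no argument in the paper to compare your proposal against.

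That said, your reconstruction is broadly faithful to the Peres--Sheffield approach: the barrier-to-supermartingale mechanism you describe (player I pulls along $-\nabla w_1/|\nabla w_1|$, the Taylor expansion of one step produces the normalized $p$-Laplacian via the noise calibration $\epsilon\sqrt{(n-1)/(p-1)}$, optional stopping plus Markov's inequality convert a small barrier value into high probability of terminating near $x_0$) is exactly the engine of \cite{ps}, and your sources of barriers for (i)--(iii) are the standard ones. The two technical caveats you flag --- uniform control of the $O(\epsilon^3)$ remainder over $O(\epsilon^{-2})$ steps, and pinning player II's optimal move to $+\epsilon\nabla w_1/|\nabla w_1|$ up to $O(\epsilon^2)$ via $|\nabla w_1|\ge c_1>0$ --- are real and are precisely where \cite{ps} does the work, so your assessment of where the difficulty lies is accurate. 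One point to handle more carefully than you indicate: the noise law depends on both the \emph{direction} and the \emph{length} of the chosen move, so player II could in principle shrink $|v'|$ to damp unfavorable noise; ruling this out again uses $|\nabla w_1|\ge c_1$, since shrinking $|v'|$ costs $O(\epsilon)$ in the first-order term while affecting only $O(\epsilon^2)$ terms.
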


\begin{thm}{\rm(Peres-Sheffield \cite{ps})}
\label{thm:regularandcontinuousconvergence} Let $\Omega\subset\R^n$
be a bounded game-regular domain and $f$
be a continuous function on $\bo$.  Then as $\epsilon \rightarrow
0$, the game values $u^\epsilon_1$ and $u^\epsilon_2$ converge
uniformly to the unique $p$-harmonic function $u$ that extends
continuously to $f$ on $\partial \Omega$.
\end{thm}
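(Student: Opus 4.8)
The plan is to combine the dynamic programming principle (DPP) satisfied by the game values with interior and boundary a priori estimates, and then to identify the limit by a viscosity-solution argument. Conditioning on the first coin toss and the first move gives, for every $x$ with $\dist(x,\bo)>\alpha\ep$,
\[
  u_i^\ep(x) = \tfrac12 \sup_{|v|\le\ep} \E\bigl[u_i^\ep(x+v+z_v)\bigr] + \tfrac12 \inf_{|v|\le\ep} \E\bigl[u_i^\ep(x+v+z_v)\bigr], \qquad i=1,2,
\]
where $z_v$ is the noise vector, uniform on the sphere of radius $|v|\sqrt{(n-1)/(p-1)}$ in the hyperplane orthogonal to $v$; and for $\dist(x,\bo)\le\alpha\ep$ the value $u_i^\ep(x)$ lies between the infimum and supremum of $f$ over $\bo\cap\overline{B}(x,\alpha\ep)$. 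In particular $\|u_i^\ep\|_\infty\le\|f\|_\infty$ and $u_1^\ep\le u_2^\ep$ on all of $\Omega$; both values satisfy the DPP.

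Next I would show that $\{u_1^\ep\}$ and $\{u_2^\ep\}$ are precompact in $C(\overline{\Omega})$. For equicontinuity on compact subsets of $\Omega$ I would run two copies of the game from nearby starting points $x,y$ with shared coin tosses, letting the moving player in each game make the same move and coupling the two noise vectors (legitimate, since once the moves coincide their laws are supported on the same sphere); the displacement between the two positions then stays equal to $x-y$ until one of the games first enters the $\alpha\ep$-collar of $\bo$, and stopping there and using the continuity of $f$ — with the crude bound $2\|f\|_\infty$ on the small-probability event that the two coupled games separate or fail to terminate — bounds $|u_i^\ep(x)-u_i^\ep(y)|$ in terms of $|x-y|$, $\ep$ and the modulus of continuity of $f$, uniformly in small $\ep$. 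For the behaviour near $\bo$ I would invoke game-regularity directly: given $\xi\in\bo$ and $\delta,\eta>0$, the forcing strategy of Definition \ref{def:gameregular} terminates the game in $\bo\cap B(\xi,\delta)$ with probability at least $1-\eta$ from any $x\in\Omega\cap B(\xi,\delta_0)$ and $\ep<\ep_0$, regardless of the opponent; being a purely geometric recipe it is symmetric in the two players, so each of them can force termination near $\xi$. Hence $u_1^\ep(x)\ge(1-\eta)\inf_{\bo\cap B(\xi,\delta)}f-\eta\|f\|_\infty$ and $u_2^\ep(x)\le(1-\eta)\sup_{\bo\cap B(\xi,\delta)}f+\eta\|f\|_\infty$ on $\Omega\cap B(\xi,\delta_0)$, which together with $u_1^\ep\le u_2^\ep$ and the continuity of $f$ controls the oscillation of $u_i^\ep$ near $\xi$ uniformly in $\ep$. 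Combined with the interior estimate and the uniform bound, Arzel\`a--Ascoli gives, along any sequence $\ep\to0$, subsequences on which $u_1^\ep$ and $u_2^\ep$ converge uniformly on $\overline{\Omega}$ to functions $w_1,w_2\in C(\overline{\Omega})$ with $w_1=w_2=f$ on $\bo$.

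Then I would show that any such limit $w$ is $p$-harmonic in $\Omega$ by checking that it is a viscosity solution of $\Delta_p w=0$. Suppose $\phi\in C^2$ touches $w$ from below at an interior point $x_0$ with $\nabla\phi(x_0)\ne0$. By uniform convergence there are $x_\ep\to x_0$ at which $u_i^\ep-\phi$ attains a local minimum $m_\ep\to0$, so $u_i^\ep\ge\phi+m_\ep$ near $x_\ep$ with equality at $x_\ep$; inserting this into the DPP at $x_\ep$ gives
\[
  0 \ge \tfrac12 \sup_{|v|\le\ep} \E\bigl[\phi(x_\ep+v+z_v)\bigr] + \tfrac12 \inf_{|v|\le\ep} \E\bigl[\phi(x_\ep+v+z_v)\bigr] - \phi(x_\ep).
\]
Taylor-expanding $\phi$ to second order, and noting that the extrema in $v$ are attained near $\pm\ep\,\nabla\phi(x_0)/|\nabla\phi(x_0)|$ because the first-order term $\nabla\phi\cdot v$ dominates, a direct computation — the noise sphere of radius $\ep\sqrt{(n-1)/(p-1)}$ producing a $\Delta\phi$ term, the extremal direction producing a $\Delta_\infty\phi$ term, and the first-order contributions $\pm\ep|\nabla\phi(x_0)|$ cancelling in $\tfrac12\sup+\tfrac12\inf$ — shows that the right-hand side equals $\tfrac{\ep^2}{2(p-1)}\bigl(\Delta\phi(x_0)+(p-2)\Delta_\infty\phi(x_0)\bigr)+o(\ep^2)$; dividing by $\ep^2$ and letting $\ep\to0$ forces $\Delta\phi(x_0)+(p-2)\Delta_\infty\phi(x_0)\le0$, i.e. $\Delta_p\phi(x_0)\le0$, by the identity $\Delta_p\phi=|\nabla\phi|^{p-2}(\Delta\phi+(p-2)\Delta_\infty\phi)$ and $\nabla\phi(x_0)\ne0$. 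The case $\nabla\phi(x_0)=0$, and the inequality $\Delta_p\phi(x_0)\ge0$ when $\phi$ touches $w$ from above, are handled symmetrically (the degenerate gradient needing a separate argument). Thus $w$ is a $p$-harmonic function in $C(\overline{\Omega})$ with $w=f$ on $\bo$; by the Comparison Principle any two such functions coincide, so there is exactly one — the unique $p$-harmonic extension $u$ of $f$ — and every subsequential limit of $u_1^\ep$ and of $u_2^\ep$ equals $u$. Since these families are precompact, $u_1^\ep$ and $u_2^\ep$ converge uniformly to $u$ as $\ep\to0$.

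The main obstacle is the viscosity step: making the Taylor expansion of the DPP rigorous with $\ep$-uniform error control, in particular estimating the error when the extremal $v$ is only approximately aligned with $\nabla\phi(x_0)$, and treating interior points where $\nabla\phi$ vanishes, where the $\ep^2$ asymptotics degenerates and a separate argument (or a perturbation of $\phi$) is needed. A secondary difficulty is the compactness step: the coupling must be controlled up to the first time either game enters the boundary collar, which requires quantitative bounds on the exit time of the noisy walk and on the probability that the two coupled games separate before terminating near the same boundary point. An alternative closer to the original argument of Peres-Sheffield is to bound $u_2^\ep$ from above and $u_1^\ep$ from below directly, by feeding a mollification of the $p$-harmonic extension $u$ of $f$ into player strategies and showing that the values of $u$ along the resulting trajectory form an approximate super/submartingale; this trades the compactness bookkeeping for regularity bookkeeping on $u$.
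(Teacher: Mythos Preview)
The paper does not prove this theorem: it is stated as a result of Peres--Sheffield \cite{ps} and used as a black box, with no proof or sketch given. There is therefore nothing in the paper to compare your proposal against.

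For what it is worth, your outline is broadly reasonable and is one of the standard routes to such convergence results (DPP, compactness, viscosity identification). You correctly flag the two genuine technical hurdles --- the degenerate case $\nabla\phi(x_0)=0$ in the viscosity step, and controlling the coupled games near the boundary collar --- and you also correctly note that the original Peres--Sheffield argument proceeds differently, comparing the game values directly to the already-known $p$-harmonic extension $u$ via mollification and super/submartingale estimates rather than going through compactness and viscosity solutions. One point in your primary approach deserves more care: the interior coupling argument as you describe it (``the moving player in each game make the same move'') does not directly bound $|u_i^\ep(x)-u_i^\ep(y)|$, because the \emph{opponent} in the $y$-game is free to play optimally there, which in general differs from the opponent's optimal play in the $x$-game. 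Fixing one player's strategy to mimic and letting the other optimize yields only a one-sided inequality; you then need a symmetric argument and a way to pass through the boundary collar without losing uniformity. This can be made to work, but it is more delicate than the sketch suggests, and is part of why the direct-comparison route you mention at the end is often preferred.
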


\begin{cor}
\label{cor:gameregularregular} Let $\Omega\subset\R^n$ be a bounded
game-regular domain. Then $\Omega$ is also regular.
\end{cor}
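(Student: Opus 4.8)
The plan is to show directly that every boundary point $x_0\in\bo$ is regular, i.e.\ that $\lim_{x\to x_0}\Hu_f(x)=f(x_0)$ for every $f\in C(\bo)$. First I would fix $f\in C(\bo)$ and invoke Theorem~\ref{thm:regularandcontinuousconvergence}: since $\Omega$ is game-regular, there exists a $p$-harmonic function $u$ on $\Omega$ that extends continuously to $\closure{\Omega}$ with $u=f$ on $\bo$. (It is the uniform limit of the game values $u_1^\epsilon,u_2^\epsilon$, but all I actually need is its existence and its continuous boundary behavior.)

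Next I would identify $u$ with the Perron solution of $f$. Since $\closure{\Omega}$ is compact and $u\in C(\closure{\Omega})$, the function $u$ is bounded on $\Omega$; being $p$-harmonic, it is in particular both $p$-superharmonic and $p$-subharmonic, and $\liminf_{x\to y}u(x)=u(y)=f(y)$ as well as $\limsup_{x\to y}u(x)=f(y)$ for every $y\in\bo$. Hence $u\in\U_f$ and $u\in\Lo_f$, which gives $\Hu_f\le u\le\Hl_f$ in $\Omega$. Combined with the general inequality $\Hl_f\le\Hu_f$ coming from the comparison principle, this forces $\Hl_f=\Hu_f=u$, so $f$ is resolutive and $H_f=u$.

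Finally, since $u$ extends continuously to $\bo$ with $u=f$ there, for every $x_0\in\bo$ we get $\lim_{x\to x_0}\Hu_f(x)=\lim_{x\to x_0}u(x)=u(x_0)=f(x_0)$. As $f\in C(\bo)$ was arbitrary, every boundary point of $\Omega$ is regular, i.e.\ $\Omega$ is regular. The only point requiring a little care — and it is a mild one — is the identification $H_f=u$, which rests on the comparison principle built into the definitions of $\U_f$ and $\Lo_f$; one must simply verify that a $p$-harmonic function which is continuous up to $\bo$ lies in both Perron classes. I do not expect any genuine obstacle here.
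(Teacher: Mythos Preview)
Your proof is correct and is precisely the natural elaboration of why the corollary follows from Theorem~\ref{thm:regularandcontinuousconvergence}; the paper itself gives no separate proof and simply records the statement as an immediate consequence of that theorem. The identification $H_f=u$ via $u\in\U_f\cap\Lo_f$ is the standard way to read off regularity from the existence of a continuous $p$-harmonic extension, so there is no gap.
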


Let us think about a probabilistic meaning of a perturbation point
in terms of tug-of-war with noise. If a boundary point is a
perturbation point, it means that the payoff value at that point
does not affect the game value. Therefore, it is naturally guessed
that a perturbation point should be avoidable with high probability
by one player whatever the other player does. This insight makes us
define the following notion.

\begin{defi}
\label{def:gamepertubation}
 $x_0\in\bo$ is called a {\it game-perturbation point} of
 $\Omega$ if  for every $\delta >0$ and $\eta >0$ there exist a $\delta_0$ and
$\epsilon_0$ such that for every $x\in \Omega\cap B(x_0,\delta_0)$
and $\epsilon < \epsilon_0$, player I has a strategy that guarantees
that an $\epsilon$-step game started at $x$ will terminate at a
point on $\partial \Omega \cap B(x_0,\delta)\setminus
B(x_0,\delta_x)$ with probability at least $1 - \eta$ and some
$\delta_x$ which is a constant depending on $x$ with
$0<\delta_x<\delta$. 
\end{defi}

The following lemma will be very useful to a
game-theoretic proof of the results in this paper.

\begin{lem}
\label{lem:gamevaluegameperturbation} Let $1<p<\infty$ and let
$\Omega\subset\R^n$ be a bounded game-regular domain. Suppose that
$f:\bo\to[0,1]$ is continuous. Let $x\in\Omega$ and $\eta>0$. Then
there exists a $\ep_0>0$ such that for every $\epsilon <
\epsilon_0$, player I has a strategy that guarantees that an
$\epsilon$-step game started at $x$ will terminate at a point on
$\{y\in \bo : f(y)>0\}$ with probability at least $H_f(x) - \eta$.
\end{lem}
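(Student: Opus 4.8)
The plan is to compare the game value $H_f(x)$ with the probability that player I can steer the game to terminate on the open set $\{f>0\}$. The key point is that $H_f(x) = \lim_{\ep\to 0} u_1^\ep(x)$ by Theorem \ref{thm:regularandcontinuousconvergence}, so it suffices to produce, for small $\ep$, a player-I strategy guaranteeing termination on $\{f>0\}$ with probability at least $u_1^\ep(x) - \eta/2$ (and then absorb the difference $|u_1^\ep(x)-H_f(x)|$ into the other $\eta/2$). First I would fix $\eta>0$ and choose a threshold $t>0$ small enough that the set $K := \{y\in\bo : f(y)\geq t\}$ is closed with $f\geq t$ on $K$; the idea is that playing to nearly optimize the expected payoff forces the exit distribution to put most of its mass where $f$ is not tiny, hence on $\{f>0\}$.

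Concretely, let $S_I$ be a strategy for player I that is $\eta t/4$-optimal for the game value, i.e. $\inf_{S_{II}} V_x(S_I,S_{II}) \geq u_1^\ep(x) - \eta t/4$. Let $S_{II}$ be \emph{any} player-II strategy and write $\mu$ for the resulting exit distribution on $\bo$ (restricted to $\{\tau<\infty\}$). Split the payoff:
\[
V_x(S_I,S_{II}) = \int_{\{f\geq t\}} f\,d\mu + \int_{\{0<f<t\}} f\,d\mu + \int_{\{f=0\}} 0\,d\mu \leq \mu(\{f>0\}) \cdot 1 \;?
\]
wait — the clean bound is $V_x \leq \mu(\{f\geq t\}) + t\,\mu(\{0<f<t\}) + 0 \le \mu(\{f>0\})$ only when combined with $f\le 1$; more usefully, $V_x \le \mu(\{f>0\}) \cdot 1$ is too weak, so instead estimate the \emph{complement}: on $\{f=0\}$ the payoff contributes $0$, and overall $V_x(S_I,S_{II}) \le \mu(\{f>0\})$ since $f\le 1$. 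Combining with $\eta$-optimality gives $\mu(\{f>0\}) \ge u_1^\ep(x) - \eta t/4 \ge u_1^\ep(x) - \eta/4$. Since this holds for every $S_{II}$, player I's strategy $S_I$ guarantees termination on $\{f>0\}$ with probability at least $u_1^\ep(x) - \eta/4$, uniformly over player II. Finally, taking $\ep_0$ small enough (using Theorem \ref{thm:regularandcontinuousconvergence}) that $u_1^\ep(x) \ge H_f(x) - \eta/2$ for all $\ep<\ep_0$ yields probability at least $H_f(x) - \eta$, as desired.

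I expect the main obstacle to be the careful bookkeeping around the fact that $S_I$ is only chosen to be nearly optimal \emph{after} $\ep$ is fixed, so I must be sure the quantifiers come in the right order: fix $\eta$ and $x$ first, then invoke game-regularity and the convergence theorem to pick $\ep_0$, then for each $\ep<\ep_0$ pick the near-optimal $S_I$ depending on $\ep$. A second subtlety is that $V_x(S_I,S_{II})$ measures \emph{expected payoff}, not a probability, so the passage to $\mu(\{f>0\})$ relies on the one-sided bound $f\le 1$ together with $f=0$ off $\{f>0\}$; this is where the hypothesis $f:\bo\to[0,1]$ is used essentially, and I would state that step explicitly rather than leaving it implicit.
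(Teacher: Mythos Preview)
Your argument is correct and is essentially the paper's own proof: pick $\ep_0$ so that $u_1^\ep(x)\ge H_f(x)-\eta/2$, take an $\eta/2$-optimal strategy $S_I$, and use the one-line bound $\mathbb{E}_x[f(x_\tau)\chi_{\{\tau<\infty\}}]\le \mathbb{P}_x(x_\tau\in\{f>0\})$ coming from $0\le f\le 1$. The threshold $t$ and the set $K=\{f\ge t\}$ you introduce are never needed (as you notice mid-argument), so you can delete that detour and simply choose $S_I$ to be $\eta/2$-optimal; the proof then matches the paper's exactly.
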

\begin{proof}
Theorem \ref{thm:regularandcontinuousconvergence} shows that there
exists a $\ep_0>0$ such that for every $\ep\leq \ep_0$, $u_1^\eps(x)
\geq H_f(x)-\eta/2$. Since $u_1^\eps(x) = \sup_{S_I} \inf_{S_{II}}
\mathbb{E}_{x} [f(x_\tau) \mathbf{ \chi}_{\{\tau<\infty\}}]$, player
I has a strategy which guarantees that $ \inf_{S_{II}}$
$\mathbb{E}_{x} [f(x_\tau) \mathbf{ \chi}_{\{\tau<\infty\}}]\geq
u_1^\eps(x)-\eta/2$. Note that
\begin{eqnarray*}
\mathbb{E}_{x} [f(x_\tau) \mathbf{
\chi}_{\{\tau<\infty\}}]&=&\mathbb{E}_{x} [f(x_{\tau}) \mathbf{
\chi}_{\{\tau<\infty\}}, x_{\tau}\in\{y\in \bo : f(y)>0\}]\\
&\leq & \mathbb P_{x}(x_{\tau}\in\{y\in \bo : f(y)>0\}).
\end{eqnarray*}
Therefore, for any player II's strategy,
$$\mathbb P_{x}(x_{\tau}\in\{y\in \bo : f(y)>0\})\geq
u_1^\eps(x)-\eta/2\geq H_f(x)-\eta.$$\end{proof}

Now we are ready to provide a probabilistic characterization of a
perturbation point by using tug-of-war with noise.

\begin{thm}
\label{thm:gameperturbatoin} Let $1<p<\infty$ and let
$\Omega\subset\R^n$ be a bounded game-regular domain. For
$x_0\in\bo$, the following conditions are equivalent.
\begin{itemize}
\item[i)]
$x_0$ is a perturbation point.
\item[ii)]
$x_0$ is a game-perturbation point.
\end{itemize}
\end{thm}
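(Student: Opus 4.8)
The plan is to use the tug-of-war description of Section~\ref{sec:tug-of-war} to move between the analytic condition (i) and the game condition (ii), the bridge being Theorem~\ref{thm:regularandcontinuousconvergence} together with approximation of semicontinuous boundary data by continuous functions (Proposition~\ref{prop:semidecreasing}(ii)). Since $\Omega$ is game-regular it is regular (Corollary~\ref{cor:gameregularregular}), so every bounded semicontinuous function on $\bo$ is resolutive (Theorem~\ref{thm:semicontinuous-resolutive on the regular}); in particular $\chi_{\{x_0\}}$, each indicator of a closed cap $\overline{B(x_0,r)}\cap\bo$, each function agreeing with a continuous one off $x_0$, and $\chi_{\bo\setminus\overline{B(x_0,r)}}$ are resolutive, with $H_{\chi_{\{x_0\}}}=\omega_p(\{x_0\},\Omega)$ and $H_{\chi_{\bo\setminus\overline{B(x_0,r)}}}=1-\omega_p(\overline{B(x_0,r)}\cap\bo,\Omega)$.

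\noindent\textbf{(ii)$\Rightarrow$(i).} Let $f\in C(\bo)$ and $g$ be bounded with $g=f$ on $\bo\setminus\{x_0\}$; after an affine normalisation we may assume $0\le f,g\le1$. It suffices to prove $\Hl_g\ge H_f$ in $\Omega$ for all such pairs, since applying this to $(-f,-g)$ and using $\Hu_g=-\Hl_{-g}$, $H_{-f}=-H_f$ gives $\Hu_g\le H_f$, whence $H_f\le\Hl_g\le\Hu_g\le H_f$. If $g(x_0)\ge f(x_0)$ then $\min(f,g)\equiv f$, so $\Hl_g\ge\Hl_f=H_f$ at once. If $g(x_0)<f(x_0)$, then $g$ is lower semicontinuous; fix $x\in\Omega$, $\eta>0$, and pick $\delta>0$ with $|f-f(x_0)|<\eta$ and $|H_f-f(x_0)|<\eta$ on $\overline\Omega\cap B(x_0,\delta)$ (continuity of $f$ and boundary regularity of $H_f$). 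Apply the game-perturbation property of $x_0$ with $\delta,\eta$ to obtain $\delta_0,\epsilon_0$ and player~I's avoidance strategies, and apply Theorem~\ref{thm:regularandcontinuousconvergence} and Lemma~\ref{lem:gamevaluegameperturbation} to $f$ to get, for small $\epsilon$, a strategy $\sigma^{\mathrm{opt}}$ for player~I from $x$ with expected $f$-payoff $\ge H_f(x)-\eta$ against any opponent. Let player~I follow $\sigma^{\mathrm{opt}}$ until the game first enters $B(x_0,\delta_0)$ and then switch to the avoidance strategy based at the current position. On the event the game never enters $B(x_0,\delta_0)$, play agrees with $\sigma^{\mathrm{opt}}$ and the game exits (if at all) at a point $\ne x_0$, where $g=f$; on the complementary event the avoidance strategy forces the exit into $\bo\cap B(x_0,\delta)\setminus B(x_0,\delta_x)$ with high conditional probability, and there the $f$-payoff and the $\sigma^{\mathrm{opt}}$-continuation value (which is $\le u_1^\epsilon(f)$ at that point, hence near $f(x_0)$) are both within $O(\eta)$ of $f(x_0)$, so diverting costs $O(\eta)$. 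Hence $u_1^\epsilon(g)(x)\ge H_f(x)-O(\eta)$ for all small $\epsilon$; since $g$ is lower semicontinuous and resolutive, $\lim_{\epsilon\to0}u_1^\epsilon(g)(x)=H_g(x)$ (squeeze $g$ between continuous functions above and below), so $H_g(x)\ge H_f(x)-O(\eta)$, and $\eta\to0$ finishes.

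\noindent\textbf{(i)$\Rightarrow$(ii).} Suppose $x_0$ is a perturbation point. Taking $f\equiv0$, $g=\chi_{\{x_0\}}$ gives $\omega_p(\{x_0\},\Omega)=H_g=H_f\equiv0$; I will feed this back into the game. For $r>0$ put $g_r=\chi_{\bo\setminus\overline{B(x_0,r)}}$, so $H_{g_r}=1-\omega_p(\overline{B(x_0,r)}\cap\bo,\Omega)$, and by Proposition~\ref{prop:semidecreasing}(ii) applied to the decreasing upper semicontinuous indicators of $\overline{B(x_0,r)}\cap\bo$ (whose pointwise limit is $\chi_{\{x_0\}}$), $H_{g_r}\uparrow1-\omega_p(\{x_0\},\Omega)\equiv1$ as $r\downarrow0$; by Harnack's convergence theorem (Theorem~\ref{thm:Harnack}) this is locally uniform in $\Omega$. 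Squeezing $g_r$ between continuous functions and using Theorem~\ref{thm:regularandcontinuousconvergence} gives $\lim_{\epsilon\to0}u_1^\epsilon(g_r)(z)=H_{g_r}(z)$, and since $\mathbb E_z[g_r(x_\tau)\chi_{\{\tau<\infty\}}]\le\mathbb P_z(x_\tau\in\bo\setminus\overline{B(x_0,r)})$, whenever $H_{g_r}(z)$ is close to $1$ player~I has, for all small $\epsilon$, a strategy from $z$ exiting in $\bo\setminus\overline{B(x_0,r)}$ with probability close to $1$ — that is, player~I can \emph{avoid} $\overline{B(x_0,r)}\cap\bo$. Now, given $\delta,\eta>0$, use game-regularity (Definition~\ref{def:gameregular}) to find a player~I strategy, for starting points in some $\Omega\cap B(x_0,\delta_0')$ and small $\epsilon$, exiting in $\bo\cap B(x_0,\delta/2)$ with probability $\ge1-\eta/2$; for such a starting point $z$, use $H_{g_r}(z)\to1$ to pick $\delta_z\in(0,\delta)$ together with the avoidance strategy for $\overline{B(x_0,\delta_z)}\cap\bo$; interleave the two strategies — follow the avoidance strategy while the game lies in $B(x_0,\delta_z)$ and the entering strategy otherwise — to obtain a single strategy whose game, with probability $\ge1-\eta$, exits in $\bo\cap B(x_0,\delta)\setminus B(x_0,\delta_z)$. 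This is exactly the game-perturbation property.

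The crux is the dictionary between the finite-$\epsilon$ game values and the Perron solutions of the \emph{discontinuous} data $\chi_{\{x_0\}}$, $\chi_{\overline{B(x_0,r)}\cap\bo}$ and the annular indicators, for which Theorem~\ref{thm:regularandcontinuousconvergence} does not directly apply: one must approximate by continuous functions, control the order of the limits in $\epsilon$, $r$ and the approximation index, and combine the monotone behaviour of $\omega_p$ under shrinking caps with Theorem~\ref{thm:Harnack} to extract $H_{g_r}\to1$ and — the most delicate part of (i)$\Rightarrow$(ii) — enough uniformity in the starting point near $x_0$ to make $\delta_0'$ and $\epsilon_0$ independent of $z$. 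The second technical point in (i)$\Rightarrow$(ii) is verifying that the interleaved entering/avoidance strategy is admissible and that player~II can neither force infinite oscillation between the two regimes nor steer the exit point out of the prescribed annulus with probability $>\eta$; this is where the strong Markov structure of tug-of-war with noise enters. In (ii)$\Rightarrow$(i) the analogous issue — bounding the continuation value of $\sigma^{\mathrm{opt}}$ from points near $x_0$ — is resolved by boundary regularity of $H_f$ together with the uniform convergence in Theorem~\ref{thm:regularandcontinuousconvergence}.
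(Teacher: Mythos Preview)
Your outline is in the right spirit but is more complicated than necessary, and the two points you yourself flag as ``delicate'' are genuine gaps that the paper avoids rather than resolves.

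\medskip
\noindent\textbf{On (i)$\Rightarrow$(ii).} Your plan is to produce the game-perturbation strategy by \emph{interleaving} two separate strategies: an ``entering'' strategy (from game-regularity) and an ``avoidance'' strategy (from $\omega_p(\{x_0\},\Omega)=0$). You then have to argue that player~II cannot force infinite oscillation between the two regimes and that the resulting map is an admissible strategy. This is exactly the difficulty the paper sidesteps. Instead of interleaving, the paper builds a single \emph{continuous} function $h:\bo\to[0,1]$ whose support already lies in an annulus $B(x_0,\delta)\setminus B(x_0,\delta_x)$ and for which $H_h(\tilde x)\ge 1-2\eta/3$. One application of Lemma~\ref{lem:gamevaluegameperturbation} to $h$ then gives the required strategy directly---no splicing, no oscillation issue. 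The key analytic step that makes this possible is: take a continuous bump $f$ supported in $B(x_0,\delta)$; set $g=f\cdot\chi_{\bo\setminus\{x_0\}}$; use the perturbation property to get $H_g=H_f$; then use Proposition~\ref{prop:semidecreasing}(ii) on the monotone family $g_n=f\cdot\chi_{\bo\setminus\overline{B(x_0,\delta_0/n)}}$ to find a finite $N$ with $H_{g_N}(\tilde x)$ already close to $H_f(\tilde x)$; finally squeeze a continuous $h$ between $g_N$ and $f$ that vanishes on $B(x_0,\delta_0/2N)$.

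\medskip
\noindent\textbf{On (ii)$\Rightarrow$(i).} Your argument runs the game with the \emph{discontinuous} payoff $g$ and asserts $\lim_{\epsilon\to 0}u_1^\epsilon(g)=H_g$. Theorem~\ref{thm:regularandcontinuousconvergence} only covers continuous payoffs, and your ``squeeze'' remark does not close this gap. You also need to control the continuation value of $\sigma^{\mathrm{opt}}$ from the (random) first entry point into $B(x_0,\delta_0)$, which is not automatic since $\sigma^{\mathrm{opt}}$ is merely near-optimal from $x$, not subgame-optimal. The paper again avoids both issues: it does \emph{not} try to show $H_g\ge H_f$ globally via game values. Instead it reduces (by the comparison principle and regularity of $\bo\setminus\{x_0\}$) to showing $\lim_{x\to x_0}H_g(x)=f(x_0)$. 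For this it constructs, for each $x$ near $x_0$, a \emph{continuous} $g_M\le g$ with $g_M=f$ outside $B(x_0,\delta_x)$ and $g_M(x_0)=-M$; applies the game-perturbation strategy to bound $u_1^\epsilon(g_M)(x)\ge f(x_0)-M\eta$; lets $\epsilon\to 0$ via Theorem~\ref{thm:regularandcontinuousconvergence} (valid since $g_M$ is continuous); and finally uses the analytic comparison $H_g\ge H_{g_M}$. No game with discontinuous payoff is ever played, and no strategy splicing is needed.

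\medskip
In short: both directions become clean once you insist on applying Theorem~\ref{thm:regularandcontinuousconvergence} and Lemma~\ref{lem:gamevaluegameperturbation} only to continuous boundary data, and use monotone approximation (Proposition~\ref{prop:semidecreasing}(ii)) on the analytic side rather than on the game side.
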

\begin{proof}
First note that $\Omega$ is also regular by Corollary
\ref{cor:gameregularregular}.

$i)\Rightarrow ii)$ Fix $\delta>0$ and $\eta>0$. Define $f:\bo\to
[0,1]$ as a function such that $f=1$ on $\bo\cap B(x_0,\delta/2)$,
$f=0$ on $\bo\setminus B(x_0,\delta)$, otherwise $f$ is continuous.
By the regularity of $x_0$, there exists a $\delta_0>0$ such that
whenever $x\in \Omega\cap B(x_0,\delta_0)$, $H_f(x)\geq 1-\eta/3$.
Let ${\tilde x}\in \Omega\cap B(x_0,\delta_0)$. Let us construct an
increasing sequence $\{g_n\}$ of lower-semicontinuous functions on
$\bo$ by letting $g_n=0$ on $\bo\cap\overline{B(x_0,\delta_0/n)}$,
otherwise $g_n=f$. Note that $g_n$ is resolutive by Theorem
\ref{thm:semicontinuous-resolutive on the regular}. Proposition
\ref{prop:semidecreasing} shows that $\lim H_{g_n}(\tilde x)=\lim
H_{g}(\tilde x)$ where $g$ is a function on $\bo$ such that $g=f$ on
$\bo\setminus\{x_0\}$ and $g(x_0)=0$. Therefore, there exists a $N$
such that $H_{g_N}(\tilde x)\geq H_g(\tilde x)-\eta/3$. Let
$\delta_x=\delta_0/2N$. Let $h:\bo\to [0,1]$ be a continuous
function such that $h\geq g_N$ on $\bo$, $h=g_{N}$ on $\bo\setminus
\overline{B(x_0,\delta_0/N)}$ and $h=0$ on $\bo\cap
{B(x_0,\delta_0/2N)}$. Since $H_h(\tilde x)\geq H_{g_N}(\tilde x)$,
it follows that $H_h(\tilde x)\geq H_g(\tilde x)-\eta/3$. Note that
$H_g=H_f$ because $x_0$ is a perturbation point. Since $H_f(\tilde
x)\geq 1-\eta/3$, it follows that $H_h(\tilde x)\geq 1-2\eta/3$.
Lemma \ref{lem:gamevaluegameperturbation} shows that player I has a
strategy that guarantees that for some $\ep_0$, an $\epsilon$-step
game started at $\tilde x$ with $\ep\leq \ep_0$ will terminate at a
point on $\{x\in \bo : h(x)>0\}$ with probability at least
$H_h(\tilde x) - \eta/3\geq 1-\eta$. Since $\{x\in \bo :
h(x)>0\}\subset \bo\cap B(x_0,\delta)\setminus B(x_0,\delta_0/2N)$,
the proof is complete.

$ii)\Rightarrow i)$ Let $f\in C(\bo)$ and $g$ be a bounded function
on $\bo$ such that $g=f$ on $\bo\setminus\{x_0\}$. To prove that
$H_f=H_g$, it is enough to show that $\lim_{x\in\Omega\to
x_0}H_f(x)= \lim_{x\in\Omega\to x_0}H_g(x)$ by the comparison
principle and the regularity of $\Omega$. Fix $\eta>0$. Since $f$ is
continuous at $x_0$, there exists $\delta>0$ such that for all
$y\in\bo\cap B(x_0,\delta)$, $|f(y)-f(x_0)|\leq \eta$. Let
$x\in\bo\cap B(x_0,\delta_0)$ and let $M=\sup_{\bo}(|f|+|g|)$. Let
$g_M:\bo\to \R$ be a continuous function such that $g_M=f$ on
$\bo\setminus B(x_0,\delta_x)$, $g_M \leq f $ on $B(x_0,\delta_x)$
and $g_M(x_0)=-M$ where $\delta_x$ is given from the assumption that
$x_0$ is a game-perturbation point . Denote by
$u_{g_M}^{1,\epsilon}(x)$ the game value for player I at $x$ with
the payoff function $g_M$. Since $x_0$ is a game-perturbation point,
player I has a strategy which guarantees that for some $\ep_0>0$,
whenever $\ep\leq \ep_0$, $u_{g_M}^{1,\epsilon}(x)\geq
f(x_0)-M\eta$. Letting $\ep\to 0$, Theorem
\ref{thm:regularandcontinuousconvergence} shows that $H_{g_M}(x)\geq
f(x_0)-M\eta$. Since $H_{g}(x)\geq H_{g_M}(x)$ and $x$ is an
arbitrary point on $\bo\cap B(x_0,\delta_0)$,
$\liminf_{x\in\Omega\to x_0} H_g(x)\geq f(x_0)-M\eta$. Letting
$\eta\to 0$ shows that $\liminf_{x\in\Omega\to x_0} H_g(x)\geq
f(x_0)$. Since $x_0$ is a regular boundary point of $\Omega$,
$\lim_{x\in\Omega\to x_0} H_f(x)=f(x_0)$. Therefore,
$\liminf_{x\in\Omega\to x_0} H_g(x)\geq \lim_{x\in\Omega\to x_0}
H_f(x).$ Similarly,  player II adopting the strategy in i) shows
that $\limsup_{x\in\Omega\to x_0} H_g(x)\leq \lim_{x\in\Omega\to
x_0} H_f(x).$ Therefore, $\lim_{x\in\Omega\to x_0}H_f(x)=
\lim_{x\in\Omega\to x_0}H_g(x)$ and the proof is complete.
\end{proof}

\begin{cor}
\label{cor:gameperturbatoin} Let $1<p\leq n$ and let
$\Omega\subset\R^n$ be a bounded game-regular domain. Then every
$x_0\in\bo$ is a game-perturbation point.
\end{cor}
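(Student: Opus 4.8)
The plan is to obtain this as a formal consequence of the two preceding results, with no new work required. First I would recall Corollary \ref{cor:plessthanninvariant}: when $1<p\le n$ and $\Omega\subset\R^n$ is a bounded domain, every boundary point $x_0\in\bo$ is already a perturbation point. This is because a singleton $\{x_0\}$ has $p$-capacity zero precisely when $1<p\le n$ (Proposition \ref{prop:onepointcapacity}), so that Theorem \ref{thm:bjornpcapcityzero} applies to any bounded $g$ agreeing with a given $f\in C(\bo)$ off $\{x_0\}$ and yields $H_g=H_f$.

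Next, since $\Omega$ is assumed game-regular, Theorem \ref{thm:gameperturbatoin} is available: for a bounded game-regular domain, a point $x_0\in\bo$ is a perturbation point if and only if it is a game-perturbation point. Chaining the two facts, every $x_0\in\bo$ is a game-perturbation point, which is the assertion.

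The only items one must check are the hypotheses feeding the two cited results. Game-regularity of $\Omega$ is assumed outright, and it supplies in particular the regularity of $\Omega$ (Corollary \ref{cor:gameregularregular}) used inside the proof of Theorem \ref{thm:gameperturbatoin}; boundedness of $\Omega$, needed for Corollary \ref{cor:plessthanninvariant}, is built into the very definition of a game-regular domain since the tug-of-war-with-noise setup is only defined for bounded $\Omega\subset\R^n$. So there is no genuine obstacle here: the statement is a direct corollary of the equivalence in Theorem \ref{thm:gameperturbatoin} combined with the $p$-capacity argument behind Corollary \ref{cor:plessthanninvariant}. If a self-contained game-theoretic argument were preferred, one could instead try to build player I's strategy directly — forcing the walk to exit near $x_0$ while skirting a small ball $B(x_0,\delta_x)$, which for $p\le n$ is plausible because the noise vector of magnitude $|v|\sqrt{(n-1)/(p-1)}\ge|v|$ spreads the walk transversally — but this would essentially re-prove the equivalence and is unnecessary for the corollary.
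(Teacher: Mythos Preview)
Your argument is correct and matches the paper's own proof exactly: the paper simply says the result follows from Theorem~\ref{thm:gameperturbatoin} and Corollary~\ref{cor:plessthanninvariant}. Your additional unpacking of why Corollary~\ref{cor:plessthanninvariant} holds and the hypothesis checks are fine but unnecessary, and note that boundedness of $\Omega$ is already assumed explicitly in the statement rather than inferred from game-regularity.
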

\begin{proof}
The result follows from Theorem \ref{thm:gameperturbatoin} and
Corollary \ref{cor:plessthanninvariant}.
\end{proof}

\section{Characterization of perturbation points}
\label{sec:main result}In this section, we provide a necessary and
sufficient condition for a perturbation point. As Corollary
\ref{cor:plessthanninvariant} shows, our main concern for a
perturbation point is the case of $p>n$. Together with Theorem
\ref{thm:gameperturbatoin}, the following theorem will be a
cornerstone.
\begin{thm}
\label{thm:gameneg} Let $p>n$ and let $\Omega\subset\R^n$ be a
bounded domain. Let $x_0\in\bo$. Then the following conditions are
equivalent.
\begin{itemize}
\item[i)] $x_0$ is a perturbation point.
    \item[ii)] $x_0$ is a game-perturbation point.
\item[iii)]
There exists $\{x_k\}$ such that for all $k\in \N$, $x_k\neq x_0$,
$x_k\in \R^n\setminus\Omega$ and $\lim_k x_k=x_0$.
\item[vi)] $\omega_p(\{x_0\},\Omega)=0$.
\end{itemize}
\end{thm}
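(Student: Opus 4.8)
The plan is to prove the chain of equivalences for Theorem~\ref{thm:gameneg} by combining Theorem~\ref{thm:gameperturbatoin} with a careful analysis of the exterior-approximation condition (iii) and the $p$-harmonic measure condition (vi). Since $p>n$, every bounded domain is game-regular by Theorem~\ref{thm:sufficientconditiongameregular}(i), so Theorem~\ref{thm:gameperturbatoin} already gives the equivalence $i)\Leftrightarrow ii)$ for free; the real content is to link these to $iii)$ and $vi)$. I would organize the argument as a cycle: $iii)\Rightarrow vi)\Rightarrow i)$ and $ii)\Rightarrow iii)$ (or equivalently $\neg iii)\Rightarrow \neg ii)$).

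First I would show $iii)\Rightarrow vi)$, i.e.\ that an exterior sequence $x_k\to x_0$ with $x_k\notin\Omega$ forces $\omega_p(\{x_0\},\Omega)=0$. The standard device here is a barrier/comparison argument: for $p>n$, a single exterior point $x_k$ at distance $r_k=|x_k-x_0|$ from $x_0$ gives, via a radial $p$-harmonic function of the form $c(|x-x_k|^{(p-n)/(p-1)}-R^{(p-n)/(p-1)})$ or a truncation thereof on an annulus $B(x_k,R)\setminus\{x_k\}$, a $p$-superharmonic function in $\Omega$ that is $\geq 1$ near $x_0$ (on $\bo\cap B(x_0,\rho_k)$ for a suitable $\rho_k$ shrinking with $r_k$) and controlled elsewhere. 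Since this is in the upper class $\U_{\chi_{\{x_0\}}}$ only after adding a small multiple of something that dominates $\chi$ on the whole boundary, I would instead estimate $\omega_p(\{x_0\},\Omega)\le\omega_p(\bo\cap B(x_0,\rho_k),\Omega)$ and use the exterior point barrier to bound the latter: the $p$-harmonic measure of a small boundary ball near a point with an exterior point nearby tends to $0$. Letting $k\to\infty$ so that $\rho_k\to 0$ gives $\omega_p(\{x_0\},\Omega)=0$. An alternative, cleaner route to $iii)\Rightarrow vi)$: $\{x_0\}\subset\overline{\{x_k : k\ge j\}}$ has $p$-capacity zero is false for $p>n$, so instead use that $x_0$ is then an exterior ray-type limit and appeal to monotonicity plus the explicit computation in Example~\ref{ex:notperturabtion}-style barriers.

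Next, $vi)\Rightarrow i)$: if $\omega_p(\{x_0\},\Omega)=0$, then for $f\in C(\bo)$ and bounded $g$ with $g=f$ on $\bo\setminus\{x_0\}$, the difference $H_g-H_f$ should vanish. Write $g=f+(g(x_0)-f(x_0))\chi_{\{x_0\}}$ up to the single point; by Proposition~\ref{prop:semidecreasing}(ii) applied to a decreasing sequence of upper semicontinuous functions pinching $\chi_{\{x_0\}}$, and by $\Hu_{\chi_{\{x_0\}}}=\omega_p(\{x_0\},\Omega)=0$ together with the analogous lower statement, one gets $\Hu_g=\Hu_f$ and $\Hl_g=\Hl_f$; since $f$ is resolutive and (by the Remark after Theorem~\ref{thm:semicontinuous-resolutive on the regular}) $g$ is resolutive, $H_g=H_f$. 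For the reverse direction feeding the cycle, $ii)\Rightarrow iii)$ is proved by contraposition: if no such exterior sequence exists, then $x_0$ is an interior point of $\overline\Omega$ relative to\dots\ more precisely $\R^n\setminus\Omega$ does not accumulate at $x_0$, so a punctured ball $B(x_0,\rho)\setminus\{x_0\}\subset\Omega$; then $x_0$ behaves like the isolated boundary point of Example~\ref{ex:notperturabtion}, and one shows player~I cannot avoid a neighborhood of $x_0$ — the game started close to $x_0$ inside this punctured ball, being essentially a random walk with noise, hits any fixed small sphere around $x_0$ with probability bounded away from $0$ and also exits at $x_0$ with positive probability, contradicting the game-perturbation property. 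Quantitatively I would estimate the exit distribution of tug-of-war with noise in $B(x_0,\rho)\setminus\{x_0\}$ using the $p$-harmonic function $1-|x-x_0|^{(p-n)/(p-1)}$ as in the Example and Theorem~\ref{thm:regularandcontinuousconvergence}.

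The main obstacle I anticipate is the implication $ii)\Rightarrow iii)$ (equivalently $\neg iii)\Rightarrow\neg ii)$): turning "the complement of $\Omega$ does not accumulate at $x_0$" into a genuine lower bound on the probability that tug-of-war with noise terminates at (or very near) $x_0$, uniformly for all player~II strategies, requires a careful game-theoretic estimate — essentially showing the isolated-boundary-point obstruction of Example~\ref{ex:notperturabtion} survives at the discrete $\epsilon$-game level. The barrier function $|x-x_0|^{(p-n)/(p-1)}$ is the natural tool, and one expects a supermartingale/submartingale argument along the lines of the proof of Theorem~\ref{thm:gameperturbatoin}, but making the constants uniform in $\epsilon$ and in the strategies, while handling the "annular" region $B(x_0,\rho)\setminus\{x_0\}$ versus the rest of $\Omega$, is the delicate part. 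The other implications are comparatively routine once the right radial barrier is in hand.
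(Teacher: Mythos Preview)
Your cycle $iii)\Rightarrow vi)\Rightarrow i)\Leftrightarrow ii)\Rightarrow iii)$ contains a genuine gap at the step $vi)\Rightarrow i)$. You write that from $\Hu_{\chi_{\{x_0\}}}=\omega_p(\{x_0\},\Omega)=0$ and Proposition~\ref{prop:semidecreasing}(ii) one gets $\Hu_g=\Hu_f$, but this implicitly treats the Perron solution as additive: knowing $\Hu_{\chi_{\{x_0\}}}=0$ does \emph{not} give $\Hu_{f+c\chi_{\{x_0\}}}=\Hu_f$ for continuous $f$, because for $p\neq 2$ the sum of two $p$-superharmonic functions is not $p$-superharmonic and there is no sublinearity inequality $H_{f+h}\le H_f+H_h$. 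Indeed, if you chase the monotone-approximation argument you sketch, taking continuous $\phi_j\downarrow\chi_{\{x_0\}}$ gives $H_{f+c\phi_j}\downarrow \Hu_{f+c\chi_{\{x_0\}}}=\Hu_g$ by Proposition~\ref{prop:semidecreasing}(ii), which is exactly the quantity you are trying to bound---the argument is circular. This is precisely the nonlinear obstruction that makes boundary perturbation questions hard for the $p$-Laplacian, and the paper's game-theoretic machinery is introduced to get around it.

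The paper's proof runs the cycle in the opposite direction, $vi)\Rightarrow iii)\Rightarrow ii)\Rightarrow i)\Rightarrow vi)$, and the entire technical weight sits on $iii)\Rightarrow ii)$: one extracts from the exterior sequence a subsequence $\{y_k\}$ with controlled ratios $|y_k|/|y_{k+1}|\le |y_{k+1}|/|y_{k+2}|$, shows via scaling and rotational invariance of $p$-harmonic measure that the ``hit $y_{k+1}$ from the sphere $|x|=|y_{k+1}|$'' probabilities $\theta_k$ are bounded below by a fixed $\theta_1>0$, and then builds an explicit iterated strategy for player~I (using Lemma~\ref{lem:gamevaluegameperturbation} at each scale) that forces termination at one of the exterior points $y_k$ with probability at least $1-(1-\theta_1/2)^i$. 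This game construction is the missing idea in your plan; your barrier sketch for $iii)\Rightarrow vi)$ may be completable, but it would still leave you stuck at $vi)\Rightarrow i)$. Incidentally, you misidentify the main obstacle: $\neg iii)\Rightarrow\neg ii)$ is easy (if $B(x_0,\rho)\setminus\{x_0\}\subset\Omega$ then regularity gives $\omega_p(x;\{x_0\},\Omega)\to 1$ as $x\to x_0$, so $vi)$ fails, hence $i)$ fails by the trivial $i)\Rightarrow vi)$, hence $ii)$ fails by Theorem~\ref{thm:gameperturbatoin}).
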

\begin{proof}
We prove our statement by showing  $vi)\Rightarrow iii)$,
$iii)\Rightarrow ii)$, $ii)\Rightarrow i)$, and $i)\Rightarrow vi)$.

$vi)\Rightarrow iii)$ Suppose that iii) is not true. Then there
exists $B(x_0,\delta)\subset\Omega$ with some $\delta>0$. Note that
if $p>n$, any bounded domain in $\R^n$ is game-regular by Theorem
\ref{thm:sufficientconditiongameregular}. Therefore, Theorem
\ref{thm:regularandcontinuousconvergence} implies that $
\lim_{x\in\Omega \to x_0}\omega_p(x;\{x_0\},\Omega)=1$, which
contradicts to $\omega_p(\{x_0\},\Omega)=0$.

$iii)\Rightarrow ii)$ The key idea is using an iteration to find a
game-perturbation strategy for player I. Without loss of generality,
we can assume that $x_0=0$. Therefore, there exists $\{x_k\}$ such
that for all $k\in \N$, $x_k\neq 0$, $x_k\in \R^n\setminus\Omega$
and $\lim_k x_k=0$. Inductively we construct a subsequence of
$\{x_k\}$, $\{y_{k}\}$ such that ${|y_k|}$ is decreasing to $0$ and
$$
\frac{|y_{k}|}{|y_{k+1}|}\leq \frac{|y_{k+1}|}{|y_{k+2}|}
~~\textrm{for all}~~ k\in\N.$$ Suppose that we have $\{y_i: 1\leq
i\leq k+1\}$. Then we choose $y_{k+2}$ among $\{x_k\}$ as
$|y_{k+2}|\leq \frac{|y_{k+1}|^2}{|y_k|}$. This can be done
inductively because $\{x_k\}$ is
converging to $0$ and $x_k\neq 0$ for all $k\in \N$. 
For each $k\in \N$, let $$\Omega_k=\{x\in\R^n:
|y_{k+2}|<|x|<|y_k|\}\setminus \{y_{k+1}\}$$ and define a function
$f_k:\Omega_k \rightarrow [0,1]$ as
$f_k(x)=\omega_p\left({x;\{y_{k+1}\},\Omega_k}\right)$. Let
$\theta_k=\inf_x\{f_k(x): x\in\R^n, |x|=|y_{k+1}|\}$.

We show that as $k\to\infty$, $\theta_k$ is increasing, thereby
$\theta_k\geq c>0$ for all $k\in\N$ with some constant $c$. For
this, note that
\begin{eqnarray*}
 \theta_{k+1}&=&\inf_x\{f_{k+1}(x): x\in\R^n, |x|=|y_{k+2}|\}  \\
   &=& \inf_x\left\{f_{k+1}\left(\frac{|y_{k+2}|}{|y_{k+1}|}x\right):
 x\in\R^n, |x|=|y_{k+1}|\right\}\\
&=& \inf_x\left\{
\omega_p\left({\frac{|y_{k+2}|}{|y_{k+1}|}x;\{y_{k+2}\},\Omega_{k+1}}\right):
x\in\R^n,
|x|=|y_{k+1}|\right\}\\
 &=&\inf_x\left\{
\omega_p\left({x;\Big\{\frac{|y_{k+1}|}{|y_{k+2}|}y_{k+2}\Big\},\Omega_k'}\right):
x\in\R^n, |x|=|y_{k+1}|\right\}
\end{eqnarray*}
where $$\Omega_k'=\left\{x\in\R^n:
\frac{|y_{k+3}||y_{k+1}|}{|y_{k+2}|}<|x|<\frac{|y_{k+1}|^2}{|y_{k+2}|}
\right\}\setminus \Big\{\frac{|y_{k+1}|}{|y_{k+2}|}y_{k+2}\Big\}.$$
Here the last equality is obtained by the radial invariance of
$p$-harmonic functions. In addition, a rotational invariance of
$p$-harmonic functions shows that
$$\inf_x\left\{
\omega_p\left({x;\Big\{\frac{|y_{k+1}|}{|y_{k+2}|}y_{k+2}\Big\},\Omega_k'}\right):
|x|=|y_{k+1}|\right\}=\inf_{x}\left\{
\omega_p\left({x;\{y_{k+1}\},\tilde \Omega_k}\right):
|x|=|y_{k+1}|\right\}$$ where $$ \tilde\Omega_n=\left\{x\in\R^n:
\frac{|y_{k+3}||y_{k+1}|}{|y_{k+2}|}<|x|<\frac{|y_{k+1}|^2}{|y_{k+2}|}
\right\}\setminus \{y_{k+1}\}.$$ Therefore, it follows that
\begin{equation}
\label{eqn:theta} \theta_{k+1}=\inf_x\left\{
\omega_p\left({x;\{y_{k+1}\},\tilde \Omega_k}\right): x\in\R^n,
|x|=|y_{k+1}|\right\}.
\end{equation}
Since
$$\frac{|y_{k+3}||y_{k+1}|}{|y_{k+2}|}\leq |y_{k+2}|
 ~~\text{and}~~ \frac{|y_{k+1}|^2}{|y_{k+2}|}\geq |y_k|
,$$ we have that $\Omega_k\subset\tilde\Omega_k$. Since
$\{y_{k+1}\}\subset\partial\Omega_k\cap\partial\tilde \Omega_k$,
Proposition \ref{prop:phamrnoicmeasure} shows that $$
\omega_p\left({x;\{y_{k+1}\},\tilde \Omega_k}\right) \geq
\omega_p\Big({x;\{y_{k+1}\},\Omega_k}\Big)=f_k(x).$$ It follows from
\eqref{eqn:theta} that $\theta_{k+1}\geq\theta_{k}$ for all
$k\in\N$. Moreover, the minimum principle and the regularity of
${y_{k+1}}$ (recall that if $p>n$, any domain in $\R^n$ is regular)
shows that $\theta_1>0$. Therefore, $\theta_k\geq\theta_1>0$ for all
$k\in\N$.

Now we are ready to give a ``game-perturbation strategy" for player
I. First note that when $p>n$, every bounded domain in $\R^n$ is
game-regular by Theorem \ref{thm:sufficientconditiongameregular}.
Fix $\eta>0$ and $\delta>0$. We can find $i,j\in \N$ such that
$(1-\theta_1/2)^i<\eta$ and $|y_j|<\delta$. Let
$\delta_0=|y_{i+j}|$. Let $x_0\in \Omega\cap B(0,\delta_0)$. Since
$|y_k|$ is decreasing to $0$, we can find some $N\in \N$ such that
$x_0\in B(0,|y_{i+j+N-1}|)\setminus {B(0,|y_{i+j+N}|)}$. The
strategy for player I is the following; Let $x_0$ be an initial
point and let
$c=\omega_p\left({x_0;\{y_{i+j+N}\},\Omega_{i+j+N-1}}\right)$. By
the minimum principle, $c>0$. Since $\Omega_{i+j+N-1}$ is
game-regular and
$\omega_p\left({x;\{y_{i+j+N}\},\Omega_{i+j+N-1}}\right)\in
C(\overline{\Omega_{i+j+N-1}})$ is $p$-harmonic, Lemma
\ref{lem:gamevaluegameperturbation}
 shows that player I has a strategy to guarantee that a sufficiently small $\epsilon$-step game
position will arrive at $y_{i+j+N}$ before hitting $\partial
\Omega_{i+j+N-1}\setminus\{y_{i+j+N}\}$ with probability at least
$c/2$. Note that since $y_{i+j+N}\in \R^n\setminus\Omega$, the game
will terminate no later than the game position reaches $y_{i+j+N}$.
Assume that the game position enters $B(0,|y_{i+j+N-1}|)$ before
reaching $y_{i+j+N}$. Then, again by Lemma
\ref{lem:gamevaluegameperturbation} with $f=\chi_{\{y_{i+j+N-1}\}}$
on $\Omega_{i+j+N-2}$, player I can arrange to reach $y_{i+j+N-1}$
before hitting $\partial \Omega_{i+j+N-2}\setminus\{y_{i+j+N-1}\}$
with probability at least $\theta_1/2>0$. Now we iterate this
argument. Whenever the game position enters $B(0,|y_{k+1}|)$ with
some $k\in\N$ before the game terminates, player I adopts a strategy
given by Lemma \ref{lem:gamevaluegameperturbation} with
$f=\chi_{\{y_{k+1}\}}$ on $\Omega_{k}$. Therefore, iterating the
above argument $i$ times shows that player I has a strategy that
guarantees that a sufficiently small $\epsilon$-step game started at
$x_0\in \Omega$ with $|x_0|<\delta_0$ will terminate at a point on $
\{y_k: j\leq k\leq 2i+j+N\}$ with probability at least
$1-(1-c/2)(1-\theta_1/2)^i>1-\eta$. Since $ \{y_k: j\leq k\leq
2i+j+N\}\subset B(0,\delta)\setminus B(0,|y_{2i+j+N+1}|)$, the proof
is complete.

$ii)\Rightarrow i)$ This is a part of the results in Theorem
\ref{thm:gameperturbatoin}.

 $i)\Rightarrow iv)$
This is the general property of a perturbation point. Let $f=0$ and
$g=\chi_{\{0\}}$. Then the result follows.
\end{proof}

As an immediate result, we answer Bj\"orn's problem affirmatively.

\begin{cor}
Let $1<p<\infty$ and let $\Omega\subset\R^n$ be a bounded domain.
Suppose that $x_0\in\bo$ is not an isolated boundary point. Then
$x_0$ is a perturbation point. In particular,
$\omega_p(\{x_0\},\Omega)=0$.
\end{cor}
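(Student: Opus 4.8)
The plan is to derive this corollary directly from Theorem \ref{thm:gameneg} together with the earlier case $1<p\leq n$. First I would split into the two regimes for $p$. When $1<p\leq n$, the conclusion is immediate from Corollary \ref{cor:plessthanninvariant}, which already asserts that \emph{every} point of $\bo$ is a perturbation point for a bounded domain in $\R^n$; no hypothesis on $x_0$ is needed in this range. So the content of the corollary is really the case $p>n$, and there we want to invoke the equivalence $iii)\Leftrightarrow i)$ (and $i)\Leftrightarrow iv)$) from Theorem \ref{thm:gameneg}.

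The key step is therefore to translate the hypothesis ``$x_0$ is not an isolated boundary point'' into condition iii) of Theorem \ref{thm:gameneg}, namely the existence of a sequence $\{x_k\}\subset\R^n\setminus\Omega$ with $x_k\neq x_0$ and $x_k\to x_0$. Since $x_0\in\bo$ is not isolated in $\bo$, there is a sequence of boundary points $x_k\in\bo\setminus\{x_0\}$ with $x_k\to x_0$; because $\bo\subset\R^n\setminus\Omega$ (as $\Omega$ is open), these $x_k$ lie in $\R^n\setminus\Omega$, so condition iii) holds verbatim. Applying Theorem \ref{thm:gameneg}, $iii)\Rightarrow i)$ gives that $x_0$ is a perturbation point, and $i)\Rightarrow iv)$ gives $\omega_p(\{x_0\},\Omega)=0$. (Alternatively one can note iii) $\Rightarrow$ iv) directly and then iv) $\Rightarrow$ i).)

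There is essentially no obstacle here — the corollary is a bookkeeping consequence of the theorem — but the one point worth stating carefully is why ``not isolated among \emph{all} boundary points'' suffices rather than requiring ``not isolated among the \emph{regular} boundary points'' as in Bj\"orn's original formulation: when $p>n$ every boundary point of every bounded domain is regular (as recalled after the definition of regularity and in Theorem \ref{thm:sufficientconditiongameregular}), so in the only nontrivial regime the two phrasings coincide, and when $1<p\leq n$ the statement holds unconditionally anyway. I would phrase the proof in two or three sentences: dispose of $1<p\leq n$ via Corollary \ref{cor:plessthanninvariant}; for $p>n$, extract the approximating sequence from non-isolatedness, note it lies in $\R^n\setminus\Omega$, and cite the implications $iii)\Rightarrow i)\Rightarrow iv)$ of Theorem \ref{thm:gameneg}.
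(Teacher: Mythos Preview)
Your proposal is correct and matches the paper's approach: the paper states this corollary as an ``immediate result'' of Theorem \ref{thm:gameneg} with no separate proof, and your argument---dispatch $1<p\leq n$ via Corollary \ref{cor:plessthanninvariant}, and for $p>n$ observe that non-isolatedness in $\bo\subset\R^n\setminus\Omega$ furnishes the sequence in condition iii) of Theorem \ref{thm:gameneg}---is exactly the intended reading. The only minor addendum is that for $1<p\leq n$ the ``in particular'' clause $\omega_p(\{x_0\},\Omega)=0$ also follows once $x_0$ is a perturbation point by the same $f=0$, $g=\chi_{\{x_0\}}$ argument used in $i)\Rightarrow iv)$ of Theorem \ref{thm:gameneg} (or from Proposition \ref{prop:onepointcapacity} and Theorem \ref{thm:absoluteequalpcapacity}).
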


Theorem \ref{thm:gameneg} gives a necessary and sufficient condition
for a perturbation point in terms of $p$-harmonic measure.

\begin{thm}
\label{thm:pharmoniczeroequivinvariant} Let $1<p<\infty$ and let
$\Omega\subset\R^n$ be a bounded domain. $x_0\in\bo$ is a
perturbation point if and only if $\omega_p(\{x_0\},\Omega)=0$.
\end{thm}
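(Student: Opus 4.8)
The plan is to treat the two regimes $1<p\le n$ and $p>n$ separately, since for $p>n$ the statement is essentially already recorded in Theorem \ref{thm:gameneg}, while for $1<p\le n$ one checks that \emph{both} sides of the biconditional always hold.

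First I would handle $p>n$. Here I would simply quote the equivalence of the conditions ``$x_0$ is a perturbation point'' and ``$\omega_p(\{x_0\},\Omega)=0$'' from Theorem \ref{thm:gameneg} (these are exactly two of the four equivalent conditions stated there), so nothing further is needed. This of course hides the real work, namely the nested-annuli iteration producing a game-perturbation strategy for player I in the proof of Theorem \ref{thm:gameneg}; that iteration is the genuine obstacle of the whole circle of ideas, but it has already been overcome there, so at this point only bookkeeping remains.

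Next I would dispose of $1<p\le n$ by arguing that every $x_0\in\bo$ satisfies \emph{both} sides of the equivalence, so that it holds trivially in this range. On the one hand, Corollary \ref{cor:plessthanninvariant} already tells us every boundary point of a bounded domain is a perturbation point when $1<p\le n$. On the other hand, a single point of $\R^n$ has $p$-capacity zero in this range by Proposition \ref{prop:onepointcapacity}; hence by Theorem \ref{thm:absoluteequalpcapacity} the singleton $\{x_0\}$ is of absolute $p$-harmonic measure zero, and specializing the defining property of Definition \ref{def:absolutezero} to the domain $\Omega$ yields $\omega_p(\{x_0\},\Omega)=0$. Thus both assertions are true and the biconditional is immediate. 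Combining the two cases finishes the proof; apart from invoking Theorem \ref{thm:gameneg}, no new computation is required.
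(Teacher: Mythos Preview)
Your proposal is correct and follows essentially the same route as the paper: split into the cases $p>n$ and $1<p\le n$, invoke Theorem \ref{thm:gameneg} for the former, and in the latter observe that both sides of the biconditional hold by Corollary \ref{cor:plessthanninvariant} together with the fact that a singleton has $p$-capacity zero (Proposition \ref{prop:onepointcapacity}) and hence $p$-harmonic measure zero via Theorem \ref{thm:absoluteequalpcapacity}.
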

\begin{proof}
When $p>n$, the result follows from Theorem
\ref{thm:pharmoniczeroequivinvariant}. Assume that $1<p\leq n$.  As
Corollary \ref{cor:plessthanninvariant} shows, every boundary point
is a perturbation point. Therefore, we only need to show that
$\omega_p(\{x_0\},\Omega)=0$. However, when $1<p\leq n$, every
single point is of $p$-capacity zero and
$\omega_p(\{x_0\},\Omega)=0$ follows from Theorem
\ref{thm:absoluteequalpcapacity}.
\end{proof}

In addition, when $\Omega$ is game-regular, we have the following.
\begin{thm}
\label{thm:gameperturbation2} Let $1<p<\infty$ and let
$\Omega\subset\R^n$ be a bounded game-regular domain. For
$x_0\in\bo$, the following conditions are equivalent.
\begin{itemize}
\item[i)] $x_0\in\bo$ is a perturbation point.
\item[ii)] $x_0\in\bo$ is a game-perturbation point.
\item[iii)] $\omega_p(\{x_0\},\Omega)=0$.
\end{itemize}
\end{thm}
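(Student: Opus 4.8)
The plan is to deduce Theorem \ref{thm:gameperturbation2} directly from the two characterizations already in hand, so that no new argument is needed; the work here is purely one of assembly. First I would invoke Theorem \ref{thm:gameperturbatoin}: since $\Omega$ is assumed game-regular, that theorem gives the equivalence of i) and ii) as stated, with no restriction on the exponent $p$. This disposes of the ``game-theoretic'' half of the statement.

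Next I would apply Theorem \ref{thm:pharmoniczeroequivinvariant}, which says that for any bounded domain and any $1<p<\infty$ a boundary point $x_0$ is a perturbation point exactly when $\omega_p(\{x_0\},\Omega)=0$; this yields the equivalence of i) and iii). Chaining the biconditionals i)$\Leftrightarrow$ii) and i)$\Leftrightarrow$iii) then gives the full three-way equivalence, and the proof is complete. The only bookkeeping is to check that the hypotheses of the cited results are met, which is immediate: $\Omega$ is bounded and game-regular (hence regular, by Corollary \ref{cor:gameregularregular}), and $1<p<\infty$.

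It is worth recording where game-regularity actually enters. Only the passage between i) and ii) uses it, through Theorem \ref{thm:gameperturbatoin} (whose proof rests on Theorem \ref{thm:regularandcontinuousconvergence}); the equivalence of i) and iii) holds for arbitrary bounded domains. In the regime $p>n$ the statement is already contained in Theorem \ref{thm:gameneg}, since every bounded domain is then game-regular by Theorem \ref{thm:sufficientconditiongameregular}; and when $1<p\le n$ all three conditions hold automatically --- i) by Corollary \ref{cor:plessthanninvariant}, iii) because a single point has $p$-capacity zero and hence absolute $p$-harmonic measure zero (Theorem \ref{thm:absoluteequalpcapacity}), and ii) by Corollary \ref{cor:gameperturbatoin}. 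Thus there is no genuine obstacle: the theorem is a synthesis of Theorems \ref{thm:gameperturbatoin} and \ref{thm:pharmoniczeroequivinvariant}, and its only new content over Theorem \ref{thm:gameneg} is the observation that the equivalence persists on the larger class of game-regular (rather than only $p>n$) domains.
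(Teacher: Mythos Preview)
Your proposal is correct and matches the paper's own proof, which simply cites Theorems \ref{thm:gameperturbatoin} and \ref{thm:pharmoniczeroequivinvariant} and nothing more. The additional commentary you supply about where game-regularity is actually used and about the $p>n$ versus $1<p\le n$ regimes is accurate and harmless, though it goes beyond what the paper records.
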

\begin{proof}
The result follows from Theorem \ref{thm:gameperturbatoin} and
Theorem \ref{thm:pharmoniczeroequivinvariant}.
\end{proof}

As other important consequence of Theorem \ref{thm:gameneg}, we show
the locality of a perturbation point, which is not obvious from the
definition of a perturbation point.

\begin{thm}
\label{thm:invariantlocal} Let $1<p<\infty$ and let
$\Omega_1,\Omega_2\subset\R^n$ be bounded domains. Let $x_0\in
\partial\Omega_1\cap\partial\Omega_2$. Suppose that there exists an open
neighborhood $U$ of $x_0$ such that $U\cap\Omega_1=U\cap\Omega_2$.
Then $x_0$ is a perturbation point of $\Omega_1$ if and only if
$x_0$ is a perturbation point of $\Omega_2$.
\end{thm}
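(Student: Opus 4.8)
The plan is to reduce the statement to the geometric characterization $i)\Leftrightarrow iii)$ of Theorem \ref{thm:gameneg}, whose condition $iii)$ is manifestly local. I would first dispose of the easy range $1<p\le n$: by Corollary \ref{cor:plessthanninvariant}, every boundary point of any bounded domain in $\R^n$ is a perturbation point, so $x_0$ is automatically a perturbation point of $\Omega_1$ and of $\Omega_2$, and there is nothing to prove.

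So assume $p>n$. Since $\Omega_1$ and $\Omega_2$ are bounded domains, Theorem \ref{thm:gameneg} applies to each of them, and $x_0$ is a perturbation point of $\Omega_i$ if and only if there is a sequence $\{x_k\}$ with $x_k\neq x_0$, $x_k\in\R^n\setminus\Omega_i$, and $x_k\to x_0$. The key observation is that the hypothesis $U\cap\Omega_1=U\cap\Omega_2$ forces $U\setminus\Omega_1=U\setminus\Omega_2$; i.e., the complements of the two domains agree inside the neighborhood $U$ of $x_0$.

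Now suppose $x_0$ is a perturbation point of $\Omega_1$ and pick a sequence $\{x_k\}$ as above for $\Omega_1$. Because $x_k\to x_0$ and $U$ is an open neighborhood of $x_0$, we have $x_k\in U$ for all large $k$; for such $k$, $x_k\notin\Omega_1$ gives $x_k\in U\setminus\Omega_1=U\setminus\Omega_2$, hence $x_k\in\R^n\setminus\Omega_2$. A tail of a sequence converging to $x_0$ with all terms distinct from $x_0$ still converges to $x_0$ with all terms distinct from $x_0$, so the tail of $\{x_k\}$ witnesses condition $iii)$ of Theorem \ref{thm:gameneg} for $\Omega_2$. Therefore $x_0$ is a perturbation point of $\Omega_2$, and the reverse implication is identical by symmetry.

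As for the main difficulty: once Theorem \ref{thm:gameneg} is available there is essentially no obstacle, since condition $iii)$ depends only on the behavior of $\R^n\setminus\Omega$ arbitrarily near $x_0$. The entire content of the locality statement is thus carried by the (already established) equivalence of $i)$ and $iii)$; without that characterization, locality would be far from obvious straight from Definition \ref{def:perturbation}, as the Perron solution $H_f$ is a global object. One could alternatively phrase the $p>n$ case through Theorem \ref{thm:pharmoniczeroequivinvariant}, reducing to the locality of the property $\omega_p(\{x_0\},\Omega)=0$, but routing through condition $iii)$ keeps the argument shortest.
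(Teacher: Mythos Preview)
Your proof is correct and follows essentially the same route as the paper: dispose of $1<p\le n$ via Corollary~\ref{cor:plessthanninvariant}, and for $p>n$ invoke the local characterization in Theorem~\ref{thm:gameneg}. The only cosmetic difference is that the paper cites condition~ii) of Theorem~\ref{thm:gameneg} while you route through condition~iii); since these are equivalent and iii) is the one that is manifestly local, your choice is the cleaner way to spell out the argument.
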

\begin{proof}
By Corollary \ref{cor:plessthanninvariant}, the case of $p>n$ is of
our only concern. In that case, the result follows from ii) in
Theorem \ref{thm:gameneg}.
\end{proof}

\section{Main results for perturbation sets and $p$-harmonic measures}
In this section, we give a necessary and sufficient condition for a
boundary perturbation problem when $f\in C(\bo)$ and $E$ is
countable. As we will see, it also characterize a structure of a
countable set of $p$-harmonic measure zero. Theorem
\ref{thm:perturbationcountablecase} is crucial. Before giving the
result, we introduce two notions. First, we generalize the notion of
a perturbation point to a set.

\begin{defi}
\label{def:perturbationset} Let $\Omega\subset\R^n$ be a bounded
domain. $E\subset \bo$ is called {\it a perturbation set (of
$\Omega$)}; whenever $f\in C(\bo)$ and a bounded function $g$ on
$\bo$ such that $g=f$ on $\bo\setminus E$, $g$ is resolutive and $
H_g=H_f$.
\end{defi}

We can observe that if $E\subset\bo$ is a perturbation set of
$\Omega$, then every $x\in E$ is a perturbation point of $\Omega$
and $\omega_p(E,\Omega)=0$ by letting $f=0$ and $g=\chi_{E}$.
Theorem \ref{thm:bjornpcapcityzero} shows that if $E\subset\bo$ is
of absolute $p$-harmonic measure zero(or equivalently of
$p$-capacity zero), then $E$ is a perturbation set. The following
definition gives a notion which is similar to a set of absolute
$p$-harmonic measure zero.

\begin{defi}
\label{def:omegaabsolutezero} Let $\Omega\subset\R^n$ be a bounded
domain. We say that $E\subset\bo$ is of {\it $\Omega$-absolute
$p$-harmonic measure zero} if
$\omega_p(E\cap\partial\tilde\Omega,\tilde\Omega)=0$ for all bounded
domains $\tilde\Omega$ such that $\tilde\Omega\cap U =\Omega\cap U$
for some open neighborhood $U$ of $E$.
\end{defi}

The following theorem shows a link between a perturbation set and a
set of $p$-harmonic measure zero as well as characterizes a set of
$p$-harmonic measure zero.
\begin{thm}
\label{thm:perturbationcountablecase} Let $\Omega\subset\R^n$ be a
bounded domain and $E\subset\bo$ be a countable set. When
$1<p<\infty$, the following conditions are equivalent.
\begin{itemize}
\item [i)] Every $x\in E$ is a perturbation point of $\Omega$.
\item [ii)] $E$ is a perturbation set of $\Omega$.
\item [iii)] Whenever $\tilde\Omega$ is a bounded domain such that
 $\tilde\Omega\cap U =\Omega\cap U$
for some open neighborhood $U$ of $E$, $E$ is a perturbation set of
$\tilde\Omega$.
\item [iv)] For all $x\in E$, $\omega_p(\{x\},\Omega)=0$.
\item [v)] $\omega_p(E,\Omega)=0$.
\item [vi)] $E$ is of $\Omega$-absolute $p$-harmonic measure zero.
\end{itemize}
Furthermore, when $p>n$, the following conditions are also
equivalent to $i)\sim vi)$.
\begin{itemize}
\item[vii)] Every $x\in E$ is a game-perturbation point of $\Omega$.
\item[viii)] Every $x\in E$ is not an isolated boundary point.
\end{itemize}
\end{thm}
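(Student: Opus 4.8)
\textbf{Proof proposal for Theorem \ref{thm:perturbationcountablecase}.}

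The plan is to establish the equivalences for general $1<p<\infty$ by a chain of implications that routes most of the work through the single-point results already proved (Theorem \ref{thm:gameneg} and Theorem \ref{thm:pharmoniczeroequivinvariant}), and then to handle the additional equivalences for $p>n$ separately. The most efficient skeleton is: $ii)\Rightarrow i)$ and $v)\Rightarrow iv)$ are immediate from the definitions (plug in $f=0$, $g=\chi_E$ or $g=\chi_{\{x\}}$, and use monotonicity of $\omega_p$ from Proposition \ref{prop:phamrnoicmeasure}); $i)\Leftrightarrow iv)$ is exactly Theorem \ref{thm:pharmoniczeroequivinvariant} applied pointwise; $iii)\Rightarrow ii)$ is trivial (take $\tilde\Omega=\Omega$); and $vi)\Rightarrow v)$ follows by taking $\tilde\Omega=\Omega$ with $U$ any bounded open neighborhood of $E$. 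So the real content is concentrated in two places: showing $iv)\Rightarrow v)$ (that countably many points each of $p$-harmonic measure zero union to a set of $p$-harmonic measure zero), and showing $v)\Rightarrow ii)$ (that a countable set of $p$-harmonic measure zero is actually a perturbation set, i.e. handling an unbounded/discontinuous perturbation $g$ on all of $E$ at once).

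For $iv)\Rightarrow v)$: since $E=\{x_k\}$ is countable and each $\omega_p(\{x_k\},\Omega)=0$, I would write $E$ as an increasing union of finite sets $E_N=\{x_1,\dots,x_N\}$. Each $E_N$ is a finite union of closed (single-point) sets of $p$-harmonic measure zero, pairwise disjoint, so Theorem \ref{compact sets} (applied inductively) gives $\omega_p(E_N,\Omega)=0$. The passage $N\to\infty$ is where I expect the main obstacle: one needs that $\omega_p(\bigcup E_N,\Omega)=\lim_N\omega_p(E_N,\Omega)$, but $\bigcup E_N=E$ need not be closed, so the monotone-convergence property of Perron solutions (Proposition \ref{prop:semidecreasing}(ii)) does not apply directly to $\chi_{E_N}\uparrow\chi_E$ — the $\chi_{E_N}$ are not lower semicontinuous and $\chi_E$ is not upper semicontinuous. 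The way around this is to not take the limit of $\chi_{E_N}$ at all, but instead to prove $v)$ via the perturbation-set route locally: I would first establish $iv)\Rightarrow ii)$ directly by a game-free argument that mimics Bj\"orn's Theorem \ref{thm:pmeasureneq} (each point of $E$ is a perturbation point by Theorem \ref{thm:pharmoniczeroequivinvariant}, and a countable union of perturbation points is a perturbation set by Theorem \ref{thm:pmeasureneq}), and then read off $v)$ from $ii)$ by the observation already noted ($f=0$, $g=\chi_E$). This sidesteps the closure issue entirely. Note Theorem \ref{thm:pmeasureneq} also supplies resolutivity of $g$, which is needed for $ii)$ to even make sense.

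It remains to close the loop through $iii)$ and $vi)$, and to bootstrap from $ii)$ to these stronger statements. For $ii)\Rightarrow iii)$ and $v)\Rightarrow vi)$: the key point is \emph{locality}. By Theorem \ref{thm:invariantlocal}, the property ``$x$ is a perturbation point'' depends only on the germ of the domain near $x$; so if $\tilde\Omega\cap U=\Omega\cap U$ for a neighborhood $U$ of $E$, then every $x\in E$ is a perturbation point of $\tilde\Omega$ too, whence (by the already-established $i)\Rightarrow ii)$ direction, applied to $\tilde\Omega$) $E$ is a perturbation set of $\tilde\Omega$, giving $iii)$; applying this with $g=\chi_{E\cap\partial\tilde\Omega}$ gives $\omega_p(E\cap\partial\tilde\Omega,\tilde\Omega)=0$, i.e. $vi)$. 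For the $p>n$ additions: $vii)$ is equivalent to $i)$ at each point by Theorem \ref{thm:gameneg} (i)$\Leftrightarrow$(ii), and $viii)$ is equivalent to $iv)$ at each point by Theorem \ref{thm:gameneg} (iii)$\Leftrightarrow$(vi) together with the elementary fact that $x_0$ is not isolated in $\bo$ precisely when there is a sequence in $\R^n\setminus\Omega$ converging to $x_0$ but distinct from it (using that, for $p>n$, $\R^n\setminus\Omega\supset\bo$ and $\Omega$ is open). Assembling: $i)\Leftrightarrow iv)$, $i)\Leftrightarrow ii)$, $ii)\Leftrightarrow v)$, $ii)\Rightarrow iii)\Rightarrow ii)$, $v)\Leftrightarrow vi)$ complete the general case, and the two biconditionals with $vii)$, $viii)$ finish the $p>n$ case. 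The one subtlety to be careful about is that in $iii)$ and $vi)$ the ambient domain changes, so every invocation of the single-point theorems must be made with respect to the correct domain; locality (Theorem \ref{thm:invariantlocal}) is precisely what licenses this.
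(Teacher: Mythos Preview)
Your proposal is correct and follows essentially the same route as the paper: the easy definitional implications, Theorem~\ref{thm:pharmoniczeroequivinvariant} for $i)\Leftrightarrow iv)$, locality (Theorem~\ref{thm:invariantlocal}) to transfer perturbation points to $\tilde\Omega$, Bj\"orn's Theorem~\ref{thm:pmeasureneq} to pass from pointwise perturbation to a countable perturbation set, and Theorem~\ref{thm:gameneg} for the $p>n$ additions. The only difference is expository: you discuss and then abandon a direct $iv)\Rightarrow v)$ via Theorem~\ref{compact sets}, whereas the paper simply routes $iv)\Rightarrow i)\Rightarrow iii)$ immediately.
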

\begin{proof}
Let $1<p<\infty$. To show the equivalence of $i)\sim vi)$, note that
it follows from the definitions that $iii)\Rightarrow ii)\Rightarrow
i)$ and $iii)\Rightarrow vi)\Rightarrow v)\Rightarrow iv)$. Since
Theorem \ref{thm:pharmoniczeroequivinvariant} shows $iv)\Rightarrow
i)$, it is only need to show that $i)\Rightarrow iii)$. Assume $i)$.
Theorem \ref{thm:invariantlocal} shows that every $x\in E$ is also a
perturbation point of $\tilde\Omega$, therefore Theorem
\ref{thm:pmeasureneq} implies that $E$ is a perturbation set of
$\tilde\Omega$. When $p>n$, the equivalence of $i)\sim viii)$
follows from Theorem \ref{thm:gameneg}.
\end{proof}
\remark Note that when $1<p\leq n$, $i)\sim vi)$ are all true.
Therefore, Theorem \ref{thm:perturbationcountablecase} is of special
interest when $p>n$. $i)\sim iii)$ is for a boundary perturbation
problem and $iv)\sim vi)$ is for $p$-harmonic measure.
$i)\Leftrightarrow iv)$ is the repetition of Theorem
\ref{thm:pharmoniczeroequivinvariant}. $ii)\Leftrightarrow v)$ is a
generalization of Theorem \ref{thm:pharmoniczeroequivinvariant}.
Both $iii)$ and $vi)$ show the locality of a boundary perturbation
problem and $p$-harmonic measure when $E$ is countable. Compare
$vi)$ to $iii)$ in Proposition \ref{prop:phamrnoicmeasure}. When
$p>n$, $viii)$ provides a geometric criterion to show that $E$ is a
perturbation set of $\Omega$ or equivalently $\omega_p(E,\Omega)=0$.
\begin{thm}
\label{thm:countablesubadditive} Let $1<p<\infty$ and let
$\Omega\subset\R^n$ be a bounded domain. For each $k\in \N$, assume
that $E_k\subset\bo$ is a countable set and
$\omega_p(E_k,\Omega)=0$.
 Then
$$\omega_p(\cup_k E_k,\Omega)=0.$$
\end{thm}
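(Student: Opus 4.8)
The plan is to deduce the statement directly from the characterization of countable sets of $p$-harmonic measure zero given in Theorem \ref{thm:perturbationcountablecase}, using only the elementary fact that a countable union of countable sets is again countable. The point is that, \emph{for countable sets}, having $p$-harmonic measure zero is equivalent to a purely pointwise condition, and pointwise conditions are trivially stable under unions.

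Concretely, I would first set $E=\bigcup_k E_k$ and note that $E\subset\bo$ is countable, since each $E_k$ is. Next, fix $k$. Since $E_k$ is countable and $\omega_p(E_k,\Omega)=0$, the implication $v)\Rightarrow iv)$ of Theorem \ref{thm:perturbationcountablecase} (applied with the countable set $E_k$ in place of $E$) gives $\omega_p(\{x\},\Omega)=0$ for every $x\in E_k$. Letting $k$ range over $\N$, we conclude that $\omega_p(\{x\},\Omega)=0$ for every $x\in E$. Now apply Theorem \ref{thm:perturbationcountablecase} a second time, this time to the countable set $E$ itself: the implication $iv)\Rightarrow v)$ yields $\omega_p(E,\Omega)=\omega_p\bigl(\bigcup_k E_k,\Omega\bigr)=0$, which is the claim.

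There is essentially no technical obstacle once Theorem \ref{thm:perturbationcountablecase} is in hand; the only thing to be careful about is that the equivalences $iv)\Leftrightarrow v)$ there are stated for an \emph{arbitrary} countable subset of $\bo$, so they may legitimately be invoked both for each $E_k$ and for the union $E$. I would emphasize in the write-up that countability is doing real work here: Theorem \ref{compact sets} only handles finitely many \emph{disjoint} closed sets, and the Llorente--Manfredi--Wu examples show that $p$-harmonic measure is genuinely not subadditive on arbitrary sets of measure zero, so the reduction to the pointwise condition (valid for countable sets) is precisely what makes the argument go through.
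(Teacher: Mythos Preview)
Your proposal is correct and follows essentially the same route as the paper: both arguments reduce to the equivalence $iv)\Leftrightarrow v)$ of Theorem~\ref{thm:perturbationcountablecase}, first extracting the pointwise condition $\omega_p(\{x\},\Omega)=0$ for every $x\in\bigcup_k E_k$ and then reassembling via $iv)\Rightarrow v)$ applied to the countable union. Your write-up is in fact a bit more explicit than the paper's, which contains a minor typo but is otherwise identical in spirit.
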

\begin{proof}
Let $x\in \cup_k E_k$.  Since $\omega_p(\cup_k E_k,\Omega)=0$,
$\omega_p( \{x\},\Omega)=0$. Therefore the result follows from
$iv)\Leftrightarrow v)$ in Theorem
\ref{thm:perturbationcountablecase}.
\end{proof}

Next we give an invariance result for $p$-harmonic measure.
\begin{thm}
\label{thm:countablepmeasureinvari} Let $1<p<\infty$ and let
$\Omega\subset\R^n$ be a bounded domain. Suppose that $E\subset \bo$
is a countable set with $\omega_p(E,\Omega)=0$. Then for every
closed set $F\subset\bo$,
$$\omega_p(x;E\cup F,\Omega)=\omega_p(x;F,\Omega) ~\text{for~all}~x\in \Omega.$$
\end{thm}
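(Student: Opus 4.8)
The plan is to reduce the claim to an upper bound, since the lower bound $\omega_p(x;E\cup F,\Omega)\ge\omega_p(x;F,\Omega)$ is immediate from monotonicity (Proposition \ref{prop:phamrnoicmeasure} ii)). For the reverse inequality, I would first fix $\eta>0$ and a closed set $F\subset\bo$, and approximate $\chi_F$ from above by a continuous function: because $F$ is closed, there is $f\in C(\bo)$ with $0\le f\le 1$, $f=1$ on $F$, and $H_f(x)\le\omega_p(x;F,\Omega)+\eta$ for the fixed $x\in\Omega$ (this uses that $\omega_p(\cdot;F,\Omega)=\Hu_{\chi_F}=\inf\U_{\chi_F}$ together with upper semicontinuity of $\chi_F$ and Proposition \ref{prop:semidecreasing} ii) applied to a decreasing sequence of continuous majorants of $\chi_F$). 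Now define $g$ on $\bo$ by $g=f$ on $\bo\setminus E$ and $g=1$ on $E$; then $g\ge\chi_{E\cup F}$ pointwise, so $\omega_p(x;E\cup F,\Omega)=\Hu_{\chi_{E\cup F}}(x)\le\Hu_g(x)$.

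The heart of the argument is then to invoke Theorem \ref{thm:perturbationcountablecase}: since $E$ is countable and $\omega_p(E,\Omega)=0$, condition v) holds, hence condition ii) holds, i.e.\ $E$ is a perturbation set of $\Omega$. Because $f\in C(\bo)$, $g$ is bounded, and $g=f$ on $\bo\setminus E$, the definition of a perturbation set (Definition \ref{def:perturbationset}) gives that $g$ is resolutive and $H_g=H_f$. Therefore
\[
\omega_p(x;E\cup F,\Omega)\ \le\ \Hu_g(x)\ =\ H_g(x)\ =\ H_f(x)\ \le\ \omega_p(x;F,\Omega)+\eta .
\]
Letting $\eta\to 0$ yields $\omega_p(x;E\cup F,\Omega)\le\omega_p(x;F,\Omega)$, and combined with the trivial inequality this gives equality. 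Since $x\in\Omega$ was arbitrary (or, alternatively, one applies Proposition \ref{prop:phamrnoicmeasure} i) to propagate from a single point), the identity holds on all of $\Omega$.

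The main obstacle is the continuous-from-above approximation of $\chi_F$: one must be careful that the majorants are genuinely continuous on $\bo$ and decrease pointwise to $\chi_F$ on a \emph{closed} set $F$, so that Proposition \ref{prop:semidecreasing} ii) applies to conclude $\Hu_{\chi_F}=\lim\Hu_{f_j}$ and hence some $f_j$ works. This is where closedness of $F$ is used in an essential way (for a general $F$ one could not produce such continuous majorants converging down to $\chi_F$). Everything else is bookkeeping: monotonicity of $p$-harmonic measure, the pointwise domination $g\ge\chi_{E\cup F}$, and the single substantive input that a countable set of $p$-harmonic measure zero is a perturbation set, which is exactly the equivalence $ii)\Leftrightarrow v)$ already established in Theorem \ref{thm:perturbationcountablecase}.
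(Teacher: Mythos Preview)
Your proposal is correct and follows essentially the same route as the paper: approximate $\chi_F$ from above by continuous functions (using closedness of $F$ and Proposition~\ref{prop:semidecreasing}~ii)), invoke Theorem~\ref{thm:perturbationcountablecase} to identify $E$ as a perturbation set, and compare. The only cosmetic difference is that the paper works with the full decreasing sequence $f_n$ and the modified functions $f_n+\chi_E$, then passes to the limit, whereas you fix $x$ and $\eta$ first and pick a single approximant; the substance is identical.
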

\begin{proof}
It suffices to show that $\omega_p(x;F,\Omega)\geq\omega_p(x;E\cup
F,\Omega)$. We can approximate $\chi_F$ by a decreasing sequence of
continuous function $\{f_n\}$ such that $\lim_n f_n=\chi_F$ on
$\bo$. Proposition \ref{prop:semidecreasing} shows that
$\lim_nH_{f_n}(x)=\omega_p(x;F,\Omega)$ for all $x\in\Omega$. Note
that $E$ is a perturbation set of $\Omega$ by Theorem
\ref{thm:perturbationcountablecase}, thereby
$H_{f_n}(x)=H_{f_n+\chi_{E}}(x)$. Since $H_{f_n+\chi_{E}}(x)\geq
\omega_p(x;E\cup F,\Omega)$, letting $n\to \infty$ shows that
$\omega_p(x;F,\Omega)\geq\omega_p(x;E\cup F,\Omega)$.
\end{proof}
\remark When $1<p\leq n$, Kurki \cite{kurki} proved a similar
invariance result by assuming that $E$ is a set of $p$-capacity zero
instead of a countable set of $p$-harmonic measure zero. However, as
the author is aware, Theorem \ref{thm:countablepmeasureinvari} is a
first invariance result for $p$-harmonic measure when $p>n$.

At last, we give a partial answer to Open problem
\ref{ques:pharmonicmeasure} in some extreme cases; for any two
closed subsets $E,F\subset\bo$ with
$\omega_p(x;E,\Omega)=\omega_p(x;F,\Omega)=0$, $\omega_p(x;E\cup
F,\Omega)=0$ if $E$ and $F$ are either somewhat  ``heavily"
overlapped or``slightly" overlapped. The latter case is a slight
generalization of Theorem \ref{compact sets}.
\begin{thm}
Let $1<p<\infty$ and let $\Omega\subset\R^n$ be a bounded regular
domain. Let $E,F\subset\bo$ are closed sets of $p$-harmonic measure
zero. Further assume that either $(E\cup F)\setminus (E\cap F)$ is
countable or there exists a closed set $G\subset\bo$ such that
$G\subset F\setminus E$ and ${F\setminus G}$ is countable. Then
$\omega_p(E\cup F,\Omega)=0$.
\end{thm}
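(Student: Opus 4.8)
The plan is to reduce both cases to one uniform situation: to write $E\cup F=C\cup S$ where $C\subset\bo$ is \emph{closed} with $\omega_p(C,\Omega)=0$ and $S\subset\bo$ is \emph{countable} with $\omega_p(S,\Omega)=0$, and then to quote Theorem~\ref{thm:countablepmeasureinvari} (with $S$ playing the role of the countable set and $C$ the role of the closed set) to conclude $\omega_p(E\cup F,\Omega)=\omega_p(C,\Omega)=0$. As a preliminary remark used in both cases, I would first record that any countable set $T\subset\bo$ all of whose points lie in $E\cup F$ satisfies $\omega_p(T,\Omega)=0$: each $x\in T$ lies in $E$ or in $F$, so by the monotonicity of $p$-harmonic measure (Proposition~\ref{prop:phamrnoicmeasure}) we have $\omega_p(\{x\},\Omega)=0$, and applying Theorem~\ref{thm:countablesubadditive} (equivalently, $iv)\Leftrightarrow v)$ of Theorem~\ref{thm:perturbationcountablecase}) to the countably many singletons making up $T$ gives $\omega_p(T,\Omega)=0$.

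\textbf{Case 1.} If $(E\cup F)\setminus(E\cap F)$ is countable, I would take $C=E\cap F$, which is closed with $\omega_p(C,\Omega)\le\omega_p(E,\Omega)=0$, and $S=(E\cup F)\setminus(E\cap F)$, which is countable with $\omega_p(S,\Omega)=0$ by the preliminary remark. Since $E\cup F=C\cup S$, Theorem~\ref{thm:countablepmeasureinvari} gives the conclusion immediately.

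\textbf{Case 2.} If there is a closed $G\subset F\setminus E$ with $F\setminus G$ countable, I would take $C=E\cup G$ and $S=F\setminus G$. Then $C$ is closed as a union of two closed sets, $S$ is countable with $\omega_p(S,\Omega)=0$ by the preliminary remark, and $E\cup F=E\cup G\cup(F\setminus G)=C\cup S$. The one step that carries content is showing $\omega_p(C,\Omega)=0$: here $E$ and $G$ are closed sets of $p$-harmonic measure zero, and they are disjoint since $G\subset F\setminus E$ forces $E\cap G=\emptyset$, so Theorem~\ref{compact sets} yields $\omega_p(E\cup G,\Omega)=0$. Then Theorem~\ref{thm:countablepmeasureinvari} finishes the proof.

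I expect the only step with real content to be the application of Theorem~\ref{compact sets} in Case 2, since it is the sole place where two genuinely (possibly uncountable) closed null sets are combined rather than a countable set being absorbed; there the only thing to check is the disjointness $E\cap G=\emptyset$, which the hypothesis $G\subset F\setminus E$ is precisely designed to provide. Everything else is bookkeeping together with the monotonicity of $\omega_p$ and the invariance Theorem~\ref{thm:countablepmeasureinvari}.
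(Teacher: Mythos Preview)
Your proof is correct and follows essentially the same approach as the paper: in Case~1 the paper also writes $E\cup F=(E\cap F)\cup\bigl((E\cup F)\setminus(E\cap F)\bigr)$ and invokes Theorem~\ref{thm:countablepmeasureinvari}, and in Case~2 the paper likewise applies Theorem~\ref{compact sets} to the disjoint closed null sets $E$ and $G$ and then absorbs the countable remainder $F\setminus G$ via Theorem~\ref{thm:countablepmeasureinvari}. Your preliminary remark makes explicit the verification that the countable leftover has $p$-harmonic measure zero (which the paper leaves implicit); note that in Case~2 you could simplify this step by observing directly that $F\setminus G\subset F$ and using monotonicity.
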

\begin{proof}
Since $E\cup F=(E\cap F)\cup\{(E\cup F)\setminus (E\cap F)\}$ and
$E\cap F$ is a closed set of $p$-harmonic measure zero, the result
follows from Theorem \ref{thm:countablepmeasureinvari}. For ii) note
that $E$ and ${G}$ are two disjoint closed sets of $p$-harmonic
zero. Theorem \ref{compact sets} shows $\omega_p(E\cup G,\Omega)=0$.
Since $E\cup F=(E\cup G)\cup (F\setminus G)$ and $F\setminus G$ is a
countable set of $p$-harmonic measure zero, the result follows again
from Theorem \ref{thm:countablepmeasureinvari}.
\end{proof}

\section {Open problems}
\label{sec:other remarks} Let $1<p<\infty$ and let
$\Omega\subset\R^n$ be a bounded domain throughout this section. It
is easy to check that if $E\subset\bo$ is a perturbation set,  $E$
is of $p$-harmonic measure zero. When $E$ is a countable set of
$p$-harmonic measure zero, Theorem
\ref{thm:perturbationcountablecase} shows that the converse is also
true, i.e. if $E$ is of $p$-harmonic measure zero, then $E$ is a
perturbation set. We may wonder whether this is still true when $E$
is not a countable set.
\begin{question}
If $E\subset\bo$ is of $p$-harmonic measure zero, then is $E$ a
perturbation set?
\end{question}

Let us recall that Theorem \ref{thm:absoluteequalpcapacity} and
Theorem \ref{thm:bjornpcapcityzero} show that if $E\subset\bo$ is of
absolute $p$-harmonic measure zero or equivalently of $p$-capacity
zero, then $E$ is a perturbation set of $\Omega$. The converse is
generally not true. However, when $E$ is a countable set, Theorem
\ref{thm:perturbationcountablecase} shows that $E$ is a perturbation
set of $\Omega$ if and only if $E$ is of $\Omega$-absolute
$p$-harmonic measure zero. This fact makes us conjecture the
following question.

\begin{question}
Is it true that $E\subset\bo$ {is a perturbation set} if and only if
 $E$ {is of $\Omega$-absolute $p$-harmonic
measure zero}?
\end{question}

If the answers to the above two problems are yes, we can give an
affirmative answer to the following open problem.

\begin{question}
If $E\subset\bo$ is of $p$-harmonic measure zero, then is $E$ {of
$\Omega$-absolute $p$-harmonic measure zero}?
\end{question}

\bigskip
\noindent{\bf Acknowledgments.} I am grateful to Anders Bj\"orn and
Scott Sheffield for their helpful advice.

\bibliographystyle{alpha}
\bibliography{Sungwook_Kim_arxiv}
\end{document}